\theoremstyle{plain}\newtheorem{theorem}{Theorem}
\theoremstyle{plain}\newtheorem{lemma}{Lemma}
\theoremstyle{plain}
\theoremstyle{plain}
\theoremstyle{plain}\newtheorem{claim}{Claim}
\theoremstyle{definition}\newtheorem{definition}{Definition}
\theoremstyle{definition}
\theoremstyle{definition}
\numberwithin{equation}{section}
\numberwithin{theorem}{section}
\numberwithin{definition}{section}
\numberwithin{cor}{section}
\numberwithin{prop}{section}
\numberwithin{example}{section}
\renewcommand{\epsilon}{\varepsilon}
\renewcommand{\phi}{\varphi}
\newcommand{\major}{1cm}
\newcommand{\minor}{0.5cm}
\newcommand{\ellipse}[3]{\draw (#1) ellipse (#2 and #3);}
	\newcommand{\typeH}[3]{\ellipse{#1}{#2}{#3}
	\coordinate (bleft) at ($(#1)+(-.6*\major, .80*\minor)$);
	\coordinate (bright) at ($(#1)+(.6*\major, .80*\minor)$);
	\coordinate (tleft) at ($(bleft)+(.1cm,.4cm)$);
	\coordinate (tright) at ($(bright)+(-.1cm,.4cm)$);
	\draw [thick, black] (bleft) -- (tleft);
	\draw [thick, black] (bright) -- (tright);
	\draw [thick, black] (tleft) -- (tright);
	\node[draw,circle,inner sep = 1pt,fill] at (tleft) {};
	\node[draw,circle,inner sep = 1pt,fill] at (tright) {};
	}
	\newcommand{\typeAnoEdge}[3]{\ellipse{#1}{#2}{#3}
	\coordinate (bleft) at ($(#1)+(-.6*\major, .80*\minor)$);
	\coordinate (bmiddle1) at ($(#1)+(-.2*\major, .98*\minor)$);
	\coordinate (bmiddle2) at ($(#1)+(.2*\major, .98*\minor)$);
	\coordinate (bright) at ($(#1)+(.6*\major, .80*\minor)$);
	\coordinate (tleft) at ($(bleft)+(.1cm,.4cm)$);
	\coordinate (tright) at ($(bright)+(-.1cm,.4cm)$);
	\draw [thick, black] (bleft) -- (tleft);
	\draw [thick, black] (bright) -- (tright);
	\draw [thick, black] (tleft) -- (bmiddle1);
	\draw [thick, black] (tright) -- (bmiddle2);
	\node[draw,circle,inner sep = 1pt,fill] at (tleft) {};
	\node[draw,circle,inner sep = 1pt,fill] at (tright) {};
	}
	\newcommand{\typeAEdge}[3]{\ellipse{#1}{#2}{#3}
	\typeAnoEdge{#1}{#2}{#3}
	\coordinate (v1) at ($(tright) + (0,.4)$);
	\coordinate (v2) at ($(v1) + (.5,0)$);
	\draw [thick, black] (v1) -- (v2);
	\node[draw,circle,inner sep = 1pt,fill] at (v1) {};
	\node[draw,circle,inner sep = 1pt,fill] at (v2) {};	
	}
	\newcommand{\typeA}[3]{\ellipse{#1}{#2}{#3}
	\typeAnoEdge{#1}{#2}{#3}
	\coordinate (cdots) at ($(#1)+(\major+2,0)$);
	\node[right, scale = 1.5] at (cdots) {$\cdots$} ;
	\typeAEdge{$(cdots)+(\major+1cm,0)$}{#2}{#3}
	}	
	\newcommand{\typeB}[3]{\ellipse{#1}{#2}{#3}
	\coordinate (bleft) at ($(#1)+(-.78*\major, .64*\minor)$);
	\coordinate (bmiddle1) at ($(#1)+(.24*\major, .95*\minor)$);
	\coordinate (bmiddle2) at ($(#1)+(.4*\major, .92*\minor)$);	
	\coordinate (bright) at ($(#1)+(.82*\major, .60*\minor)$);
	\coordinate (tleft) at ($(bleft)+(.1, .55cm)$);
	\coordinate (tmiddle) at ($(bmiddle1)+(-.1, .41cm)$);
	\coordinate (tright) at ($(bright)+(-.1cm, .6cm)$);
	\draw [thick, black] (bleft) -- (tleft);
	\draw [thick, black] (bmiddle1) -- (tmiddle);
	\draw [thick, black] (bmiddle2) -- (tright);
	\draw [thick, black] (bright) -- (tright);
	\draw [thick, black] (tleft) -- (tmiddle);
	\node[draw,circle,inner sep = 1pt,fill] at (tleft) {};
	\node[draw,circle,inner sep = 1pt,fill] at (tmiddle) {};
	\node[draw,circle,inner sep = 1pt,fill] at (tright) {};
	}
	\newcommand{\typeX}[3]{\ellipse{#1}{#2}{#3}
	\coordinate (bleft) at ($(#1)+(-.58*\major, .82*\minor)$);
	\coordinate (bright) at ($(#1)+(.58*\major, .82*\minor)$);
	\coordinate (tleft) at ($(bleft)+(0, .4cm)$);
	\coordinate (tright) at ($(bright)+(0, .4cm)$);
	\draw [thick, black] (bleft) -- (tleft);
	\draw [thick, black] (bright) -- (tright);
	\node[draw,circle,inner sep = 1pt,fill] at (tleft) {};
	\node[draw,circle,inner sep = 1pt,fill] at (tright) {};
	}
	\newcommand{\typeY}[3]{\ellipse{#1}{#2}{#3}
	\coordinate (bleft) at ($(#1)+(-.5*\major, .84*\minor)$);
	\coordinate (bmiddle) at ($(#1)+(.15*\major, .99*\minor)$);
	\coordinate (bright) at ($(#1)+(.62*\major, .79*\minor)$);
	\coordinate (tleft) at ($(bleft)+(0, .5cm)$);
	\coordinate (tright) at ($(bright)+(-.15cm, .5cm)$);
	\draw [thick, black] (bleft) -- (tleft);
	\draw [thick, black] (bmiddle) -- (tright);
	\draw [thick, black] (bright) -- (tright);
	\node[draw,circle,inner sep = 1pt,fill] at (tleft) {};
	\node[draw,circle,inner sep = 1pt,fill] at (tright) {};
	}
\title{On the Hamiltonicity of the $k$-regular graph game}
\author{Jeremy Meza and Samuel Simon}
\date{\today}
\begin{document}
\maketitle

\begin{abstract}
We consider a game played on an initially empty graph where two players alternate drawing an edge between vertices subject to the condition that no degree can exceed $k$. We show that for $k=3$, either player can avoid a Hamilton cycle, and for $k\geq4$, either player can force the resulting graph to be Hamiltonian.

\end{abstract}

\section{Introduction}

The Hamiltonicity of $k$-regular graphs has been studied extensively. Bollob\'{a}s \cite{BB} and Fenner and Frieze \cite{FF} showed that for sufficiently large $k$, almost all random $k$-regular graphs are Hamiltonian. Robinson and Wormald improved this by showing that almost all cubic graphs are Hamiltonian \cite{RWcubic}, and later that almost all $k$-regular graphs for $k \geq 3$ are Hamiltonian \cite{RW}. Note in contrast that almost all 2-regular graphs are not Hamiltonian.

There have also been various results regarding Hamiltonicity in Maker-Breaker games. For random graphs, Ben-Shimon, Ferber, Hefetz and Krivelevich \cite{BFHK} showed that with high probability, the Maker wins the Hamiltonian game exactly when the random graph process reaches minimum degree 4, whereas the breaker wins if the minimum degree is at most 3 (see also \cite{HKS}).  Lastly, considering the biased $(1: b)$ Hamiltonicity Maker-Breaker game on $K_n$, Krivelevich \cite{K} gave an upper bound of $ \left(1- \frac{30}{(\log n)^{1/4}} \right) \frac{n}{\log n} $ on the critical bias of the game, which is the maximum possible value of $b$ for which Maker still wins.

We show another type of Hamiltonian threshold. Consider the following 2-person game played on $n$ vertices:
one at a time, each player draws one edge between any two vertices, under the condition that no vertex can have more than degree $k$. This continues until no player can draw an edge without making a vertex of degree greater than $k$. This game was introduced by Frieze and Pegden \cite{FP}, in which they showed that in the 3-regular game, any player can ensure planarity of the resulting graph, whereas in the 4-regular game, any player can ensure the resulting graph has an arbitrarily large clique minor. In this paper we consider when players can force Hamiltonicity. In particular, we prove the following theorems.

\begin{theorem}
For $k \geq 4$, and regardless of who has the first move, a player in the $k$-regular graph game has a strategy to ensure the resulting graph is Hamiltonian.
\label{Hamilton cycle}
\end{theorem}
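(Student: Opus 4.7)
The plan is to exhibit an explicit strategy for the designated player, whom we call Alice, under which the final game graph must contain a Hamilton cycle; the same strategy will work whether Alice moves first or second, since the imbalance in move counts is at most one throughout the game. The idea is to proceed in two phases: a \emph{merging phase}, in which Alice incrementally grows a spanning system of vertex-disjoint paths while reducing the number of components, and a \emph{closing phase}, in which she joins the endpoints of the one remaining spanning path into a cycle.

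Let $F$ denote the subgraph consisting of edges Alice has played. The invariant will be that $F$ is always a vertex-disjoint union of simple paths spanning $V$, where isolated vertices count as trivial paths. Call a vertex \emph{free} if its degree in the current game graph $G$ is strictly less than $k$. Alice's rule in the merging phase is: select two free vertices $u,w$ that are endpoints of \emph{distinct} components of $F$ with $uw$ not yet an edge of $G$, and play $uw$. This move preserves the invariant and decreases the component count of $F$ by one. The closing phase is Alice's final move, playing the edge between the two endpoints of the one remaining spanning path.

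The first step of the analysis is verifying that Alice's merging rule is always executable. Each of Bob's moves increments at most two individual vertex degrees by $1$, so to saturate an $F$-endpoint $v$ Bob must commit at least $k - \deg_F(v) \ge k - 1 \ge 3$ of his moves to edges incident to $v$. While the number of components $c$ is large, the $2c$ endpoint slots dwarf the number of vertices Bob can have saturated, and the $\binom{2c}{2}$ candidate merging pairs dwarf the edges Bob has played, so a valid pair $u,w$ always exists. When $c$ drops small, Alice needs to use the fact that $k \ge 4$ gives at least two slots of slack beyond the two path edges at each vertex and that she is free to adaptively choose which merges to perform.

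The main obstacle is the closing phase: when only one spanning path $P$ with endpoints $u,v$ remains, Alice must play $uv$, but Bob may have already played $uv$ earlier or saturated $u$ or $v$. To handle this, Alice augments the merging rule by maintaining a \emph{reservoir} of roughly $k+1$ designated vertices that she intentionally leaves as endpoints of short components for as long as possible, then arranges the final few merges so that the ultimate endpoints of $P$ come from this reservoir. Because Bob must spend at least $k-1$ moves to saturate any reservoir vertex and one further edge to block each candidate endpoint pair, his total move budget is insufficient to foreclose all options whenever $k \ge 4$, leaving her at least one legal closing move. The breakdown of this reservoir argument precisely at $k = 3$ is consistent with the complementary non-Hamiltonicity result the paper establishes for the cubic game.
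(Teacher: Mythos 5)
There is a genuine gap, and it is structural rather than merely a matter of missing detail: your construction insists that the Hamilton cycle consist entirely of Alice's own edges ($F$ is ``the subgraph consisting of edges Alice has played,'' and both the invariant and the closing move concern $F$ alone). In a Hamilton cycle contained in $F$, every vertex must receive two of Alice's edges, so Alice has already lost the moment Bob brings a single vertex $v$ to degree $k$ in $G$ while $\deg_F(v)\le 1$. For $k=4$ Bob forces exactly this with a double threat: he opens with an edge $u_1u_2$ between two isolated vertices. Alice's reply is a single edge and cannot be incident to both $u_1$ and $u_2$ (that edge is already present), so one of them, say $u_2$, still has no Alice-edge. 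Bob now plays an edge at $u_2$ on each subsequent turn; between consecutive Bob moves Alice can raise $\deg_F(u_2)$ by at most $1$, so $u_2$ reaches degree $4=k$ with $\deg_F(u_2)\le 1$ and is permanently frozen as an endpoint or trivial component of $F$. No adaptivity in the merging order and no reservoir for the closing move can repair this, since the obstruction is complete within Bob's first three or four moves. Your counting arguments (endpoint slots ``dwarf'' saturated vertices; Bob's ``move budget is insufficient'') only address whether \emph{some} legal merge exists somewhere, not whether \emph{every} vertex eventually attains $F$-degree $2$, which is what your plan actually requires; and the reservoir step is in any case asserted rather than proved.

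The paper's proof avoids this trap by building the Hamilton path out of \emph{both} players' edges: after each of her moves the Hamiltonian player maintains a Hamilton path on the induced subgraph of non-isolated vertices whose two ends have degree $1$ and at most $2$ in the whole game graph. Each opposing edge is either absorbed into the path (for instance, an edge drawn between two isolated vertices becomes two consecutive path edges after her reply) or falls on an interior vertex, which has two units of degree slack precisely because $k\ge 4$. If you want to salvage your outline, the essential correction is to let your path system live in $G$ rather than in Alice's edge set, at which point you are led back to the paper's single-path invariant and its case analysis.
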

\begin{theorem}
Regardless of who has the first move, a  player in the 3-regular graph game has a strategy to ensure the resulting graph is not Hamiltonian.
\label{no Hamilton cycle}
\end{theorem}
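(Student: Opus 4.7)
The plan is to exhibit an explicit strategy for a player, call her Alice, that forces the terminal graph to contain a small isolated component; for $n \geq 4$ this makes the graph disconnected and hence non-Hamiltonian. The strategy is the same (up to a one-move shift) whether Alice has the first or second move.

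A useful preliminary step is to analyze the possible terminal states. Let $L := \{v : \deg v < 3\}$ denote the set of degree-deficient vertices at termination. Any two non-adjacent vertices of $L$ would admit a legal edge, contradicting termination, so $L$ must be a clique. Since each vertex of $L$ has degree at most $2$, this forces $|L| \leq 3$. Moreover, if $|L| = 3$ then each vertex of $L$ uses both of its remaining degrees on the other members of $L$, so $L$ spans an isolated triangle; similarly, $|L| = 2$ with both endpoints of degree $1$ yields an isolated edge. It therefore suffices for Alice to force one of these configurations (an isolated triangle for $n$ odd; an isolated edge or isolated $K_4$ for $n$ even, since parity of the degree sum rules out the triangle configuration).

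Alice's strategy is as follows. She fixes a pool $P$ of candidate vertices of some sufficiently large constant size and maintains the subset $C \subseteq P$ of \emph{clean} vertices---those incident to no played edge with the other endpoint in $V \setminus P$. On each of her turns she plays either a \emph{construction move}, an edge within $C$ progressing toward a target isolated subgraph $H \subseteq P$ of the type dictated by the parity of $n$, or a \emph{saturation move}, an edge inside $V \setminus P$ driving its degrees toward $3$. Whenever the opponent plays an edge leaving $P$ at some vertex $v \in P$, Alice marks $v$ as dirty, removes it from $C$, and adjusts her target accordingly. Because each opponent move dirties at most one pool vertex while only a bounded number of construction moves are needed to complete $H$, a constant-size pool ensures $|C|$ remains adequate throughout.

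The principal obstacle is the coordination of the two simultaneous tasks: Alice must finish constructing $H$ \emph{and} saturate $V \setminus H$ to $3$-regular by the time the game terminates, so that no edge from $H$ to its complement is ever legally available. This requires a careful bookkeeping of degrees---in particular, pre-empting the opponent's attempts to keep a complement vertex of degree $<3$ in reserve for attacking $H$ at the last moment---and a case split on the parity of $n$. The hardest case is showing that, when the complement is almost saturated, Alice still has the flexibility to drive it to full $3$-regularity while $H$ remains intact.
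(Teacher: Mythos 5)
Your terminal-state analysis is fine: at the end of play the degree-deficient vertices form a clique of size at most $3$, and the size-$3$ case is an isolated triangle. But this classification does not by itself produce a non-Hamiltonian graph (the cases $|L|\le 2$ are consistent with Hamiltonicity), so everything rests on the claim that Alice can \emph{force} an isolated triangle, edge, or $K_4$ --- and that claim is exactly what your write-up does not establish. Your final paragraph names ``the principal obstacle'' and ``the hardest case'' and then stops; that obstacle is the entire content of the theorem. Concretely: an isolated triangle or isolated pendant edge is not a locked-in structure. Until every vertex outside $H$ is saturated, the opponent can legally attach $H$ to the rest of the graph on the very next move, so Alice must complete $H$ only after $V\setminus H$ is fully $3$-regular --- a condition she cannot unilaterally arrange, since the opponent makes half the moves and can perpetually redirect degree deficiency. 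For the $K_4$ target, when the opponent plays an edge from a partially-built $H$ to the outside, Alice cannot simply mark one vertex ``dirty'': the entire partial component is now merged with the outside and must be abandoned \emph{and} saturated, so each attack costs her several tempi and consumes board material, not one pool vertex. Your ``constant-size pool'' accounting does not address this cascading cost, and without some monotone potential argument there is no reason the process terminates in Alice's favor on a fixed vertex set.

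The paper avoids this trap by relaxing the target. Rather than forcing disconnection, it forces failure of $2$-connectivity, via structures that are \emph{irrevocably} bad the moment they appear: a proper $3$-regular component, or an ``eventual cut vertex'' (a vertex $x$ with at most two neighbors into a component of $G\setminus x$ all of whose vertices are already saturated --- such an $x$ is guaranteed to be a cut vertex at termination no matter how play continues). The proof that Player 2 can always reach one of these states is where all the work lies: a potential function $F(C_n)+E(D_n)$ built from ``degrees of freedom,'' shown to be non-increasing under Player 2's strategy, together with a classification of the dangerous intermediate shapes (types $\mathcal{H},\mathcal{A},\mathcal{B},\mathcal{X},\mathcal{Y}$) and explicit game trees for the endgame. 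Notably, even with the weaker goal, the paper only establishes the disjunction ``$3$-regular component \emph{or} eventual cut vertex,'' which suggests the opponent may be able to deny an outright isolated component; your plan commits to the stronger outcome without evidence it is achievable. To repair your approach you would need either (i) a proof that a $3$-regular component can always be forced (stronger than what the paper shows), or (ii) a relaxation to a locked-in obstruction such as a cut vertex, together with a termination argument of the potential-function type.
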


\section{Forcing Hamiltonicity} \label{forcing hamiltonicity}

Call the player with the goal of making the resulting graph Hamiltonian the \emph{Hamiltonian player}; call the opponent the \emph{non-Hamiltonian player}.

\begin{proof}[Proof of Theorem \ref{Hamilton cycle}]

We give a strategy so that as long as isolated vertices remain, at the end of the Hamiltonian player's turn, there is a Hamilton path $P$ on the induced subgraph of non-isolated vertices with ends $x_1$ and $x_2$ of degree 1 and degree at most 2, respectively. Note that this implies that the Hamiltonian player can force a Hamilton cycle by drawing the edge $x_1 \sim x_2$ when no isolated vertices remain. We proceed by induction on the number of moves the Hamiltonian player has taken.

The base case depends on which of the players moves first. If the Hamiltonian player moves first, then the single edge drawn is a Hamilton path as desired. If the non-Hamiltonian player moves first, the Hamiltonian player will draw an edge from one of the endpoints of the already existing edge to an isolated vertex, thereby creating a Hamilton path as desired.

Now suppose after the Hamiltonian player's $m^{th}$ move, there is a Hamilton path $P$ as described above. We use the following notation: $p_i \in P$ denotes a vertex in $P$ that is not an end, $x_1, x_2$ denote the ends of $P$ with degrees 1 and at most 2, respectively, and $v, w$ denote vertices not in $P$. Through exhaustion of cases, we give the corresponding move the Hamiltonian player should make in order to maintain the desired Hamilton path.

\begin{center}
\begin{tabular}{l | l}
non-Hamiltonian player's move & Hamiltonian player's move \\ \hline
(a) Draw an edge from $v$ to $w$.				& (a) Draw an edge from $x_2$ to $v$. \\
(b) Draw an edge from $p_i$ to $p_j$, $i \neq j$. 	& (b) Draw an edge from $x_2$ to $v$. \\
(c) Draw an edge from $p_i$ to $x_j$, $j=1,2$. 		& (c) Draw an edge from $x_j$ to $v$. \\
(d) Draw an edge from $p_i$ to $v$. 				& (d) Draw an edge from $x_2$ to $v$. \\
(e) Draw an edge from $x_j$ to $v$, $j=1,2$.		& (e) Draw an edge from $v$ to $w$. \\
(f) Draw an edge from $x_1$ to $x_2$.			& (f) Draw an edge from $x_2$ to $v$.
\end{tabular}
\end{center}

Figure \ref{HP strategy} depicts the 6 possible moves described above that the non-Hamiltonian player can make and the corresponding moves the Hamiltonian player will make in order to maintain the desired Hamilton path. \end{proof}

\begin{figure}[h]
\begin{subfigure}[h]{0.3\textwidth}
\begin{tikzpicture}

    \coordinate (e1) at (0,0);
    \coordinate (e2) at (3,0);
    \coordinate (halfway) at (2,0);
    \coordinate (v) at (2,1);
    \coordinate (w) at (3,1);

    \node[draw,circle,inner sep=1pt,fill] at (e1) {};
    \node at (e1) [below left] {$x_1' = x_1$};
    \node[draw,circle,inner sep=2pt,fill] at (e2) {};
    \node[draw,circle,inner sep=1pt,fill] at (v) {};
    \node[draw,circle,inner sep=1pt,fill] at (w) {};
     
    \draw [very thick, black] (e1) -- (e2) node [below right] {$x_2$};
    \draw [dashed] (w) -- (v) node [above left] {$x_2' = w$};
    \draw [thick, dotted] (e2) -- (w) node [above right] {$v$};

\end{tikzpicture}
\subcaption{nHP connects $v \sim w$. }
\end{subfigure}
\hspace{2.5cm}
\begin{subfigure}[h]{0.3\textwidth}
\begin{tikzpicture}

    \coordinate (e1) at (0,0);
    \coordinate (e2) at (3,0);
    \coordinate (halfway) at (2,0);
    \coordinate (pi) at (1,0);
    \coordinate (pj) at (2.5,0);
    \coordinate (v) at (3,1);

    \node[draw,circle,inner sep=1pt,fill] at (e1) {};
    \node at (e1) [below left] {$x_1' = x_1$};
    \node[draw,circle,inner sep=2pt,fill] at (e2) {};  
    \node[draw,circle,inner sep=1pt,fill] at (pi) {};
    \node at (pi) [below] {$p_i$};
    \node[draw,circle,inner sep=1pt,fill] at (pj) {};
    \node at (pj) [below] {$p_j$};
    \node[draw,circle,inner sep=1pt,fill] at (v) {};
         
    \draw [very thick, black] (e1) -- (e2) node [below right] {$x_2$};
    \draw [dashed] (pj) arc [radius=1.1, start angle=45, end angle= 135];
    \draw [thick, dotted] (e2) -- (v) node [above right] {$v=x_2'$};

\end{tikzpicture}
\subcaption{nHP connects $p_i \sim p_j$. }
\end{subfigure}
\\
\begin{subfigure}[h]{0.3\textwidth}
\begin{tikzpicture}
   
    \coordinate (e1) at (0,0);
    \coordinate (e2) at (3,0);
    \coordinate (halfway) at (2,0);
    \coordinate (pi) at (1,0);
    \coordinate (v) at (3,1);

    \node[draw,circle,inner sep=1pt,fill] at (e1) {};
    \node at (e1) [below left] {$x_i' = x_i$};
    \node[draw,circle,inner sep=1pt,fill] at (e2) {};  
    \node[draw,circle,inner sep=1pt,fill] at (pi) {};
    \node at (pi) [below] {$p_i$};
    \node[draw,circle,inner sep=1pt,fill] at (v) {};
         
    \draw [very thick, black] (e1) -- (e2) node [below right] {$x_j$};
    \draw [dashed] (e2) arc [radius=1.43, start angle=45, end angle= 135];
    \draw [thick, dotted] (e2) -- (v) node [above right] {$v=x_j'$};

\end{tikzpicture}
\subcaption{nHP connects $p_i \sim e_j$, $j \in \{1,2\}$. }
\end{subfigure}
\hspace{2.5cm}
\begin{subfigure}[h]{0.3\textwidth}
\begin{tikzpicture}

    \coordinate (e1) at (0,0);
    \coordinate (e2) at (3,0);
    \coordinate (halfway) at (2,0);
    \coordinate (pi) at (1,0);
    \coordinate (v) at (1,1);

    \node[draw,circle,inner sep=1pt,fill] at (e1) {};
    \node at (e1) [below left] {$x_1'=x_1$};
    \node[draw,circle,inner sep=2pt,fill] at (e2) {};  
    \node[draw,circle,inner sep=1pt,fill] at (pi) {};
    \node at (pi) [below] {$p_i$};
    \node[draw,circle,inner sep=1pt,fill] at (v) {};
         
    \draw [very thick, black] (e1) -- (e2) node [below right] {$x_2$};
    \draw [dashed] (pi)--(v) node [above right] {};
    \draw [thick, dotted] (e2) -- (v) node [above right] {$v = x_2'$};

\end{tikzpicture}
\subcaption{nHP connects $p_i \sim v$. }
\end{subfigure}
\\
\begin{subfigure}[h]{0.3\textwidth}
\begin{tikzpicture}

    \coordinate (e1) at (0,0);
    \coordinate (e2) at (3,0);
    \coordinate (halfway) at (2,0);
    \coordinate (v) at (3,1);
    \coordinate (w) at (2,1);

    \node[draw,circle,inner sep=1pt,fill] at (e1) {};
    \node at (e1) [below left] {$x_i'=x_i$};
    \node[draw,circle,inner sep=1pt,fill] at (e2) {};  
    \node[draw,circle,inner sep=1pt,fill] at (v) {};
    \node[draw,circle,inner sep=1pt,fill] at (w) {};
             
    \draw [very thick, black] (e1) -- (e2) node [below right] {$x_j$};
    \draw [dashed] (e2)--(v) node [above right] {$v$};
    \draw [thick, dotted] (v) -- (w) node [above left] {$w=x_j'$};

\end{tikzpicture}
\subcaption{nHP connects $x_j \sim v$ for $j \in \{1,2\}$. }
\end{subfigure}
\hspace{2.5cm}
\begin{subfigure}[h]{0.3\textwidth}
\begin{tikzpicture}

    \coordinate (e1) at (0,0);
    \coordinate (e2) at (3,0);
    \coordinate (halfway) at (2,0);
    \coordinate (v) at (3,1);

    \node[draw,circle,inner sep=1pt,fill] at (e1) {};
    \node at (e1) [below left] {$x_2'=x_1$};
    \node[draw,circle,inner sep=2pt,fill] at (e2) {};  
    \node[draw,circle,inner sep=1pt,fill] at (v) {};
             
    \draw [very thick, black] (e1) -- (e2) node [below right] {$x_2$};
    \draw [dashed] (e2) arc [radius=2.15, start angle=45, end angle= 135];
    \draw [thick, dotted] (e2)--(v) node [above right] {$v=x_1'$};

\end{tikzpicture}
\subcaption{nHP connects $x_1 \sim x_2$. }
\end{subfigure}

\caption{The 6 possible moves the non-Hamiltonian player (nHP) can make (shown in dashed) and the corresponding moves the Hamiltonian player (HP) should make (shown in dotted) in order to preserve the desired Hamilton path. The original Hamilton path $P$ is depicted as the thick line. The smaller vertex denotes $x_1$ (of degree 1) and the bigger vertex denotes $x_2$ (of degree at most 2). Vertices $x_1', x_2'$ denote the ends of $P'$, the new Hamilton path after the Hamiltonian player's move.}
\label{HP strategy}
\end{figure}
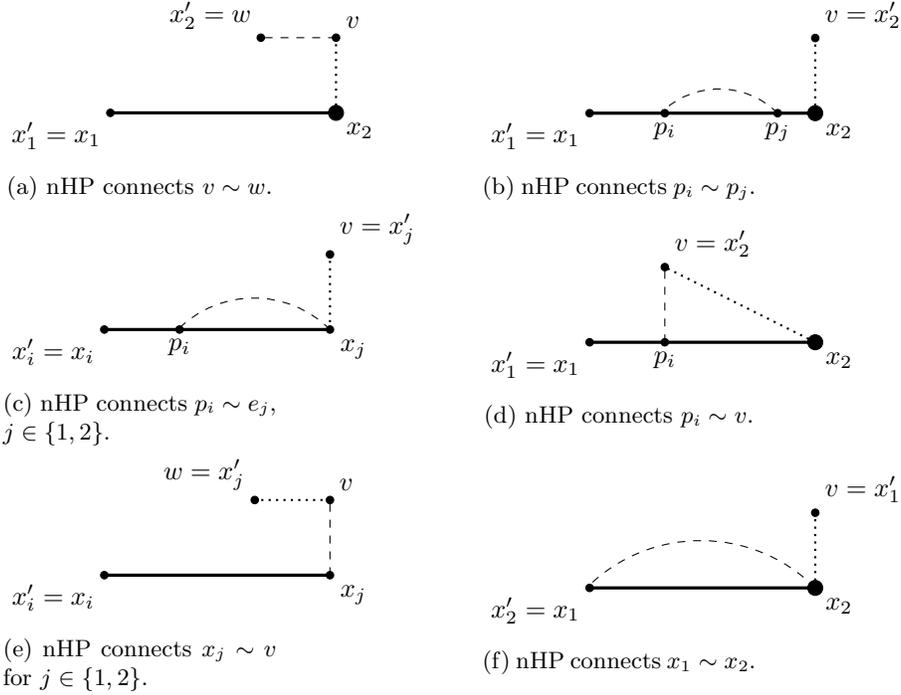

\section{Avoiding Hamiltonicity} \label{avoiding hamiltonicity}

We now prove Theorem \ref{no Hamilton cycle}, which gives a sharp Hamiltonian threshold. In order to do so, we prove a more general result.
\begin{theorem}
Given any initial graph $G_0$, there is some finite number of vertices $N$ so that if Players 1 and 2 play the $3$-regular graph game on $G_0$ and the empty graph on $N$ vertices, then Player 2 has a strategy that ensures the resulting graph is not 2-connected.
\label{general no Hamilton}
\end{theorem}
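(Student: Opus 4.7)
The plan is to have Player 2 force the final graph to contain an \emph{isolated triangle} --- three mutually adjacent vertices with no other incident edges --- which disconnects the graph and hence precludes 2-connectivity. Player 2 will designate three fresh isolated vertices as a target triple $T = \{a,b,c\}$, set aside a large reserve $R$ of additional fresh isolated vertices, and maintain the invariant throughout the game that the three vertices of $T$ are pristine: each has degree $0$ and no incident edges.

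On her own turns, Player 2 will play a legal edge between two low-degree vertices of $V \setminus T$ whenever one exists, pushing $V \setminus T$ toward being $3$-regular. Whenever Player 1 plays an edge incident to some $t \in T$, Player 2 will evict $t$ from $T$ and substitute a fresh vertex from $R$, cascading through any further $T$-vertex whose pristineness is broken by the eviction. Once every vertex of $V \setminus T$ reaches degree $3$, the only legal edges remaining in the entire graph will be the three triangle edges inside $T$, and these will be added in the last three moves of the game regardless of who plays them --- producing the desired isolated triangle.

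The parameter $N$ will be chosen large enough as a function of $|V(G_0)|$ to ensure that $R$ outlasts Player 1's disruption capacity: each disruption costs at most $|T|=3$ reserve vertices and Player 1 has at most $\tfrac{3}{4}(|V(G_0)|+N)$ moves in the whole game, so a linear $N = c\,|V(G_0)| + O(1)$ with parity chosen so that $|V(G_0)| + N - 3$ is even (allowing $V \setminus T$ to admit a $3$-regular subgraph) should suffice.

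The hardest part will be handling game states in which the low-degree vertices of $V \setminus T$ happen to form a clique: no legal edge exists inside $V \setminus T$, yet $V \setminus T$ is not fully saturated. In such states Player 2 cannot directly advance her saturation goal and will be forced into a \emph{repair} move --- either a self-directed crossing from $T$ into $V \setminus T$ (consuming a reserve vertex via the ensuing refresh), or an internal $T$-edge (which sets up a cascading refresh later if Player 1 subsequently crosses). Bounding the frequency and cost of these repair events, refining the refresh rule so as to minimize cascading, and verifying the parity conditions so that the final three moves really do complete an isolated triangle rather than leaving some vertex outside $T$ with degree less than $3$, will form the technical heart of the argument.
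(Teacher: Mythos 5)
Your plan has a fatal counting flaw before one even reaches the case analysis you defer. The invariant ``$T$ stays pristine until $V\setminus T$ is saturated'' must survive the entire game, but pristine vertices can only ever be destroyed, never created: every time Player 1 plays an edge incident to a $T$-vertex (and in the worst case he knows your strategy, hence knows $T$), you permanently lose one or two vertices from the pool of degree-$0$ vertices, and your refresh consumes another. Meanwhile the game cannot end while isolated vertices remain, since an isolated vertex can always be legally joined to another isolated vertex or to any vertex of degree less than $3$; so the game lasts $\Theta(|V(G_0)|+N)$ moves, Player 1 gets $\Theta(N)$ of them, and he can spend every one of them poisoning $T$ at a cost to you of roughly two reserve vertices per move. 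Enlarging $N$ enlarges his budget proportionally, so the inequality ``$R$ outlasts Player 1's disruption capacity'' never closes; by the time the endgame arrives the reserve is exhausted, Player 1 attaches the surviving $T$-vertices to the rest of the graph, and no isolated triangle forms. The underlying conceptual problem is that a triple of degree-$0$ (or a triangle of degree-$2$) vertices is never irrevocably safe: it only becomes a separate component at the instant every other vertex reaches degree $3$, and the adversary controls half the moves between now and then.

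This is why the paper works with targets that are irrevocable the moment they are achieved: a $3$-regular component formed while other vertices remain (no further edge can ever touch it), or an ``eventual cut vertex'' (a vertex with at most two neighbors into a fully saturated piece). The paper fixes a vertex $x\in G_0$, tracks its component $C_n$, and uses a degrees-of-freedom potential $F(C_n)+E(D_n)$ that Player 2 can force downward, showing one of these irrevocable configurations must occur. Note also that the obstruction you flag as ``the technical heart'' --- the state where the deficient vertices of the active part form a clique, concretely a component that is $3$-regular except for two adjacent degree-$2$ vertices (the paper's type $\mathcal{H}$) --- is not a detail to be bounded away: it is the case that consumes most of the paper's argument (Claims \ref{3 options} and \ref{tree of cases} and the explicit game trees), and your proposal offers no mechanism for resolving it. As written, the proposal both rests on a reserve bound that cannot hold and defers exactly the configuration where the real difficulty lives.
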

In the case when the non-Hamiltonian player makes the second move, then Theorem \ref{general no Hamilton} with $G_0 = \emptyset$ implies Theorem \ref{no Hamilton cycle}, since 2-connectedness is a necessary condition for Hamiltonicity. In the case when the non-Hamiltonian player makes the first move, then Theorem \ref{general no Hamilton} with $G_0$ being two vertices and an edge between them implies Theorem \ref{no Hamilton cycle}.

\begin{definition}
A vertex $x \in G$ is an \emph{eventual cut vertex} if in $G \setminus x$ there is a connected component $S$ such that $x$ has either 1 or 2 neighbors in $S$ and for all $v \in S$, $\deg(v) = 3$ in $G$.
\end{definition}

Note that if Players 1 and 2 are playing the 3-regular graph game, then at any stage in the game if there is an eventual cut vertex and at least two vertices not connected, then the resulting graph cannot contain a Hamilton cycle.

\begin{definition}
Let $G$ be a connected graph. We say that $G$ is \emph{type $\mathcal{H}$} if $G$ is 3-regular except for two vertices of degree 2 that are adjacent.
\end{definition}
Figure \ref{type H ex} depicts examples of graphs with an eventual cut vertex and an example of a type $\mathcal{H}$ graph. The ellipse represents a connected component with vertices of degree 3, and the vertices on top represent the vertices in the components that are not of degree 3.
\begin{figure}[h]
\centering
\begin{subfigure}[h]{0.5\textwidth}
\centering
\begin{tikzpicture}
	\ellipse{0,0}{\major}{\minor}
	\coordinate (bleft) at ($(0,0)+(-0.3*\major, 0.95*\minor)$);
	\coordinate (bright) at ($(0,0)+(0.3*\major, 0.95*\minor)$);
	\coordinate (top) at ($(0,0)+(0, \minor+.5cm)$);
	\draw [thick, black] (bleft) -- (top);
	\draw [thick, black] (bright) -- (top);
	\node[draw,circle,inner sep = 1pt,fill] at (top) {};
	
	\ellipse{3,0}{\major}{\minor}
	\coordinate (bottom) at ($(3,0)+(0, \minor)$);
	\coordinate (top2) at ($(3,0)+(0, \minor+.5cm)$);
	\draw [thick, black] (bottom) -- (top2);
	\node[draw,circle,inner sep = 1pt,fill] at (top2) {};
	
\end{tikzpicture}
\subcaption{Graphs with an eventual cut vertex}
\end{subfigure}
\hspace{1cm}
\begin{subfigure}[h]{0.3\textwidth}
\centering
\begin{tikzpicture}
	\typeH{0,0}{\major}{\minor}
\end{tikzpicture}
\subcaption{Type $\mathcal{H}$ graph}
\end{subfigure}
\caption{ \label{type H ex} }
\end{figure}

\begin{proof}[Proof of Theorem \ref{general no Hamilton}]

Fix a vertex $x \in G_0$.
Define $G_n$ to be the graph after the $n^{th}$ move, excluding isolated vertices. Define $C_n$ to be the connected component of $G_n$ containing $x$ and let $D_n = G_n \setminus C_n$.

For $n \geq 1$, after the $(2n-1)^{th}$ move, Player 2's strategy is as follows:

If $C_{2n-1}$ has less than four vertices, then draw an edge from a vertex in $C_{2n-1}$ to any isolated vertex. Otherwise, the strategy is as follows (Player 2 always draws edges within $C_{2n-1}$ unless specifically stated otherwise): \\[.5em]
\begin{tabular}{p{5.9cm} | p{5.38cm} }
State of $C_{2n-1} \quad (|C_{2n-1}| \geq 4)$ & Player 2's move \\ \hline
(a) There is more than one vertex of degree 1. &  Draw an edge between any two vertices of degree 1. \\
(b) There is exactly one vertex $w$ of degree 1 and exactly two vertices $u, v$ of degree 2 such that $u \not \sim v$ and WLOG $w \sim u$ and $w \not \sim v$. & Draw the edge $w \sim v$.  \\
(c) There is exactly one vertex of degree 1 and not in option (b). & Draw an edge between the vertex of degree 1 and a vertex of degree 2. \\
(d) There are no vertices of degree 1 and two vertices of degree 2 that are not adjacent. & Draw and edge between two vertices of degree 2 such that this does not leave two degree 2 vertices that are adjacent (cf. Lemma~\ref{square}). \\
(e) $C_{2n-1}$ is type $\mathcal{H}$ and $E(D_{2n-1}) \neq 0$. & Draw an edge from one of the vertices of degree 2 to a vertex in $D_{2n-1}$. \\
(f) $C_{2n-1}$ is type $\mathcal{H}$ and $E(D_{2n-1}) = 0$. & Strategy outlined in Claim \ref{tree of cases}.
\end{tabular}

\vspace{5pt}
First note that the state of $C_{2n-1}$ is partitioned by the number of degree 1 vertices, and when there are no degree 1 vertices, is further partitioned by the number of degree 2 vertices. Every case is covered except for when there are no vertices of degree 1 and less than two vertices of degree 2. However, in this case, then $C_{2n-1}$ is either 3-regular or has an eventual cut vertex, and so already witnesses the non-Hamiltonicity of the resulting graph.

Now note that there are cases in which Player 2's strategy is not possible. For example, if $C_{2n-1}$ is in option (c), but there are no degree 2 vertices to connect the degree 1 vertex to, then Player 2 cannot follow this strategy.
We first claim that if for some $n$, Player 2's strategy is not possible to perform on $C_{2n-1}$, then $C_{2n-1}$ witnesses the non-Hamiltonicity of the final graph $G$.
This is because if Player 2 cannot follow the strategy, then $C_{2n-1}$ must be in one of the following cases:

\noindent Case 1: $C_{2n-1}$ is 3-regular.

\noindent Case 2: $C_{2n-1}$ is 3-regular except for exactly one vertex of degree 1 or 2. 

\noindent Case 3: $C_{2n-1}$ is 3-regular except for exactly one vertex of degree 1 and exactly one vertex of degree 2 that are adjacent.

In all of these cases, $C_{2n-1}$ is either 3-regular of has an eventual cut vertex, and hence $G_n$ and $G$ cannot have a Hamilton cycle.

Before we show that Player 2's strategy is a winning strategy, we first introduce some concepts.
For any connected component $A$, define the \emph{degrees of freedom} in $A$, $F(A)$ as
\[ F(A) := \sum_{v \in A} 3 - \deg(v) \]
Define the \emph{effective degrees of freedom} $E(A) := F(A) - 2 $.
The degrees of freedom and effective degrees of freedom of a graph is the sum of the degrees of freedom and effective degrees of freedom of each component, respectively.

We now show the following two claims.
\begin{claim}
There exists $n \geq 1$ so that one of the following will be true:
\begin{enumerate}[(i)]
\itemsep-.2em
\item $C_{n}$ is type $\mathcal{H}$ and $E(D_n) = 0$.
\item $G_{n}$ has a component that is 3-regular.
\item $G_{n}$ has a component with an eventual cut vertex.
\end{enumerate}
\label{3 options}
\end{claim}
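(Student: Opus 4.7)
My plan is to bypass a dynamic analysis of Player 2's strategy and instead push the game to its terminal state. Since each move adds one edge under the degree bound $3$, the game terminates after at most $\lfloor 3N/2 \rfloor$ moves; I will show that, for $N \geq 4$, the terminal graph $G_{n_0}$ already witnesses one of (i), (ii), (iii).

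First I would analyze what ``no legal edge remains'' at $G_{n_0}$ implies structurally. If an isolated vertex still exists, then every non-isolated vertex must already have degree $3$ (else one could add an edge between them), so $C_{n_0}$ is $3$-regular and (ii) holds. Otherwise every vertex of the game lies in $G_{n_0}$, and the set $U$ of unsaturated vertices (degree at most $2$) forms a clique, since any non-edge between two such vertices would be a legal move. Being connected, $U$ lies in a single component $K$; if $K \neq C_{n_0}$ then $C_{n_0}$ has no unsaturated vertex and (ii) holds, so I may reduce to $K = C_{n_0}$, with $|U| \leq 3$ forced by the degree bound.

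The remaining case analysis on $|U|$ is short. $|U| = 0$ gives (ii) directly. $|U| = 1$, say $U = \{u\}$, yields an eventual cut vertex by taking $S$ to be the component of $G_{n_0} \setminus u$ containing any neighbor of $u$: since $u$ is the only unsaturated vertex, all of $S$ has degree $3$ in $G_{n_0}$, so (iii) holds. For $|U| = 2$ the two vertices $u_1, u_2$ are adjacent, and the degree pair $(d_1, d_2)$ partitions into three sub-cases: $(1, 2)$ gives an eventual cut vertex via $u_2$ and the component of $G_{n_0} \setminus u_2$ containing its degree-$3$ neighbor (yielding (iii)); $(2, 2)$ makes $C_{n_0}$ type $\mathcal{H}$, and since every component of $D_{n_0}$ is then $3$-regular, either $D_{n_0} = \emptyset$ with $E(D_{n_0}) = 0$ giving (i), or $D_{n_0}$ contains a $3$-regular component giving (ii); $(1, 1)$ forces $C_{n_0} = \{u_1, u_2\}$, so $D_{n_0}$ covers the remaining $N-2$ vertices with $3$-regular components, giving (ii). Finally $|U| = 3$ forces $U$ to be a triangle with no edges to its complement, so $C_{n_0} = U$ and $D_{n_0}$ contains the remaining $N - 3$ vertices in $3$-regular components, giving (ii).

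The main subtlety I expect is the $(2,2)$ sub-case of $|U| = 2$, where the type-$\mathcal{H}$ alternative forks between (i) and (ii); the resolution is that each $3$-regular component $A$ contributes $E(A) = -2$, so $E(D_{n_0}) = 0$ forces $D_{n_0} = \emptyset$. The remaining bookkeeping is verifying that $N$ is chosen large enough (any $N \geq 4$ suffices for the structural analysis) --- this is the one place the ``some finite number of vertices $N$'' wording of the theorem statement is genuinely being used.
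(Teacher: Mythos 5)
There is a genuine gap, and it is structural rather than a fixable detail: your argument never uses Player~2's strategy, and the claim cannot be proved without it. Claim~\ref{3 options} sits inside the proof of Theorem~\ref{general no Hamilton}, where Player~2's moves are fixed, and its job is to produce a configuration that \emph{certifies non-2-connectedness of the final graph}. For that, the $3$-regular component or eventual cut vertex must arise as a proper piece of the eventual graph --- in particular while unused vertices remain --- which is exactly why the paper tracks the potential $F(C_n)+E(D_n)$ via Lemmas~\ref{E decreases} and~\ref{F+E non-increasing} to bound the move index $n$ in terms of $G_0$ alone, and only then chooses the number of vertices large. Your terminal-position analysis instead lets $n$ be the last move, at which point every outcome you list satisfies (i)--(iii) only in a vacuous, literal sense: ``the whole final graph is one connected $3$-regular component'' satisfies (ii) but is perfectly compatible with Hamiltonicity, and an eventual cut vertex found only when no vertices remain outside its saturated block need not be a cut vertex at all. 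Case (i) at the terminal move is likewise a dead end, since Claim~\ref{tree of cases} requires further moves $m>n$ to convert a type-$\mathcal{H}$ position into a real witness.

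A concrete instance shows the approach cannot be repaired without reintroducing the strategy. On $6$ vertices the game can terminate in the graph obtained from $K_4$ by replacing one edge $ab$ with a path $a\sim u\sim v\sim b$: only $u,v$ are unsaturated, they are adjacent, so no legal move remains; the graph is type $\mathcal{H}$ with $D=\emptyset$, so your case $|U|=2$, degrees $(2,2)$ declares (i) satisfied --- yet this graph is Hamiltonian ($a,u,v,b,c,d,a$). Since your argument applies to \emph{every} play of the game, it proves a statement that holds even in plays Player~2 loses, so it cannot feed into the theorem. The missing content is precisely the dynamic part you set out to bypass: showing that under the strategy of Section~\ref{avoiding hamiltonicity} the quantity $F(C_{2n-1})+E(D_{2n-1})$ is non-increasing, drops strictly unless Player~1 plays on isolated vertices, and therefore forces one of (i)--(iii) within a number of moves bounded independently of the number of vertices.
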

\begin{claim}
Suppose that for some $n$, $C_{n}$ is of type $\mathcal{H}$, and $E(D_n) = 0$.
Then there exists $m > n$ such that $G_m$ has a component that is either 3-regular or has an eventual cut vertex.
\label{tree of cases}
\end{claim}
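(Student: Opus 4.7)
My plan is to prove Claim \ref{tree of cases} by an exhaustive ``tree of cases'' analysis, branching on the structure of $D_n$ and on Player 1's immediate follow-up move. The preliminary observation is that under $E(D_n) = 0$, every component $B$ of $D_n$ has $F(B) = 2$ exactly, because a component with smaller $F$ would either be 3-regular or have an eventual cut vertex, immediately witnessing the claim (and we may take $m = n + 1$ after any legal Player 2 move, since such structural witnesses persist under further play). A component with $F = 2$ falls into one of three forms: a single degree-1 vertex with the rest of degree 3 (which itself is already an eventual cut vertex), two adjacent degree-2 vertices with the rest of degree 3 (type $\mathcal{H}$), or two non-adjacent degree-2 vertices with the rest of degree 3.

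In the first easy branch, if $D_n$ contains a component with two non-adjacent degree-2 vertices, Player 2 plays the edge between those two vertices, making that component 3-regular. In the main branch, where $D_n$ is non-empty and every component of $D_n$ is type $\mathcal{H}$, Player 2 draws the edge $u \sim p$, where $u$ is one of the two degree-2 vertices of $C_n$ and $p$ is a degree-2 vertex of some type-$\mathcal{H}$ component $B \subset D_n$. The merged component $C_n \cup B$ then has exactly two non-adjacent degree-2 vertices, namely $v$ (the partner of $u$ in $C_n$) and $q$ (the partner of $p$ in $B$). Regardless of Player 1's response, Player 2 wins in at most two further moves via the standard strategy table: closing $v \sim q$ under option (d) produces a 3-regular component; extending $v$ or $q$ to a previously-isolated vertex puts Player 2 into option (c) and creates an eventual cut vertex; and any move outside the merged component leaves the state in option (d) to be closed by Player 2 on the next turn.

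The principal obstacle is the sub-branch $D_n = \emptyset$, in which no component of $D_n$ is available to merge with. The plan here is for Player 2 to play an edge between two isolated vertices, creating a new two-vertex $D$ component with $E = 2$, after which the game proceeds via the standard table. Player 1's response falls into a bounded set of sub-cases: merging the new $D$ component into $C_n$ (answered via option (b), returning the game to state (f) with $|C_n|$ larger by 2 and the isolated-vertex supply smaller by 2 -- a ``cycle'' step); extending the new $D$ component or playing a further isolated-isolated edge (option (e), handled by a merge-and-close argument analogous to the one in the main branch); or extending $C_n$ through an isolated vertex not in the new component (option (c), which immediately produces an eventual cut vertex). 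The key invariant is that each cycle strictly consumes two isolated vertices, so choosing $N$ sufficiently large relative to $|G_0|$ bounds the number of possible cycles; once cycling stops, Player 1 is forced into a non-cycling response which Player 2 converts into a winning component. The delicate bookkeeping -- verifying that at the moment the winning component appears, $G_m$ still has enough non-trivial structure outside that component to preserve the claim's conclusion (and ultimately the Theorem's non-2-connectedness requirement) -- is the main technical burden of the proof.
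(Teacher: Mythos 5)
Your handling of the branch $D_n \neq \emptyset$ is essentially the paper's: merge $C_n$ with a component of $D_n$ through one degree-2 vertex of each, then either close the two remaining non-adjacent degree-2 vertices into a 3-regular component or punish Player 1's interference (you omit the sub-case where Player 1 joins $v$ or $q$ to a \emph{third} component of $D_n$, but that is dispatched the same way). The genuine gap is in the branch $D_n = \emptyset$, which you rightly call the principal obstacle. Your opening move (an edge between two isolated vertices $x,y$) together with your prescribed answer to the ``merge'' reply $u \sim x$ (option (b), i.e.\ playing $y \sim v$ and recreating a type $\mathcal{H}$ component with empty $D$) sets up a cycle that Player 1 is happy to repeat: each round returns to ``type $\mathcal{H}$, $D$ empty'' with the turn alternating, and a few more isolated vertices are absorbed into $C$. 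Your termination claim --- that once cycling stops Player 1 is forced into a response that Player 2 converts into a winning component --- is false: cycling stops only when the isolated vertices are exhausted, and at that point the unique remaining move turns the \emph{entire graph} into one connected 3-regular component. That satisfies the letter of the claim but is useless for Theorem \ref{general no Hamilton}, since such a graph can perfectly well be 2-connected and even Hamiltonian; the ``delicate bookkeeping'' you defer is not bookkeeping but the crux, and it fails outright for this strategy. (Note also that option (b) is not the right reply to the merge: after $u \sim x$, playing $v \sim x$ makes $x$ an eventual cut vertex immediately, with the pendant $y$ as the separated piece.)

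The paper avoids the cycle with a different opening: Player 2 attaches a pendant isolated vertex to one degree-2 vertex of $C_n$, creating a one-move threat to complete that pendant into a degree-2 eventual cut vertex. Every reply of Player 1 except attaching a second pendant loses at once, and the resulting forced tree (Figures \ref{tree of cases P2} and \ref{tree of cases P1}) never returns to a type $\mathcal{H}$ position with $D$ empty; it terminates in a bounded number of moves at a type $\mathcal{A}$ or type $\mathcal{B}$ position, which Lemmas \ref{type A} and \ref{type B} then convert into a proper 3-regular component or an eventual cut vertex while uncommitted vertices still remain. To repair your argument you would need to replace the isolated-edge opening with a forcing move of this kind, and you would also need to treat explicitly the case in which position $n$ arises on Player 1's turn, which your proposal never addresses.
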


Claim 1 gives $n$ such that either $G_n$ has a component that is 3 regular, has a component that has an eventual cut vertex, or $C_n$ is type $\mathcal{H}$ with $E(D_n) = 0$.
In the last case, claim 2 gives $m > n$ such that $G_m$  has a component that is either 3 regular or has an eventual cut vertex.
Subject to the proofs of claims 1 and 2, defining $N = m$ finishes the proof of Theorem \ref{general no Hamilton}
\end{proof}

\begin{proof}[Proof of Claim \ref{3 options}]

Suppose that $C_n$ is type $\mathcal{H}$ for some $n \geq 1$. We invoke the following lemmas, to be proved in Section \ref{case studies}.
\begin{lemma}
Let $C_{n}$ be type $\mathcal{H}$ without an eventual cut vertex and let $E(D_{n})=y$. Suppose there exists $m > n+1$ such that $C_m$ is type $\mathcal{H}$ without an eventual cut vertex.
Then, either 
\begin{enumerate}
\setlength{\itemsep}{-.2em}
\item $E(D_m) < y$ or
\item $E(D_m) = y$ and if there exists $m' > m+1$ with $C_{m'}$ of type $\mathcal{H}$ and no eventual cut vertex, then $E(D_{m'}) < y$.
\end{enumerate}
\label{E decreases}
\end{lemma}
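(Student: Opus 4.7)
The plan is to track the evolution of $E(D)$ over the stretch of moves from time $n+1$ through time $m$. The first step is to compute, for each possible type of move, the change $\Delta E(D)$ induced by the added edge based on where its two endpoints sit (within $C$; within one $D$-component; between two $D$-components; between $C$ and $D$; between $D$ and an isolated vertex; between $C$ and an isolated vertex; or between two isolated vertices). A direct calculation from the definitions yields the per-move change as one of $0$, $-2$, $-E(A)$ (for absorptions of a $D$-component $A$ into $C$), $+1$, or $+2$. In particular, only moves attaching an isolated vertex to $D$ can strictly increase $E(D)$.

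With this bookkeeping in hand, I would exploit Player 2's strategy at each of their moves in the interval. The key case is when the component containing $x$ is type $\mathcal{H}$ with $E(D) > 0$: then option (e) applies, and Player 2 draws an edge from one of $C$'s degree-2 vertices into some $D$-component $A$, contributing $\Delta E(D) = -E(A)$. The no-ECV hypothesis on $C_n$ will be used to guarantee Player 2 can select $A$ so that this quantity is non-positive. For intermediate states in which $C$ is not type $\mathcal{H}$, Player 2's options (a)--(d) similarly contribute $\Delta E(D) \leq 0$. The constraint $F(C_n) = F(C_m) = 2$ forces $|C_m| - |C_n|$ to be even and bounds the number of $C$-to-$D$ crossing edges, so that after summing the per-move changes one obtains $E(D_m) \leq y$.

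The delicate point is the equality case $E(D_m) = y$. By the accounting above, equality can hold only if Player 1 has planted exactly enough fresh effective freedom via isolated-vertex edges to cancel Player 2's decreasing moves; in that case a newly created $D$-component $A'$ survives at time $m$. Since $C_m$ is again type $\mathcal{H}$ without ECV, Player 2's next option-(e) move (at time $m+1$) absorbs $A'$ or a component of comparable positive $E$, so any subsequent return of $C$ to type $\mathcal{H}$ at time $m' > m+1$ must satisfy $E(D_{m'}) < y$, establishing clause (2). The main obstacle I anticipate is the detailed case analysis on Player 1's possible moves inside versus outside $C$---particularly verifying that no adversarial sequence of isolated-vertex insertions allows Player 1 to both keep $E(D)$ stationary and delay the strict decrease past time $m'$---and checking that the no-ECV assumption really does secure Player 2 a suitable choice of target $A$ in every option-(e) application throughout $[n+1,m']$.
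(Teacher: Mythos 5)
Your per-move ledger for $E(D)$ is essentially correct, but the argument does not close as described, for two reasons. First, tracking $E(D)$ alone cannot yield $E(D_m)\leq y$: Player 1 may join two isolated vertices on every turn, each such move adds $+2$ to $E(D)$, and the moves you credit to Player 2 in the intermediate states (options (a)--(d), all drawn inside $C$) change $E(D)$ by exactly $0$, not by a compensating negative amount. The compensation lives in $F(C)$, which Player 2's within-$C$ moves decrease by $2$; this is why the paper works with the combined potential $F(C)+E(D)$ (Lemma \ref{F+E non-increasing}: over each round this quantity is non-increasing, with equality iff Player 1 joins two isolated vertices) and uses $F(C_n)=F(C_m)=2$ only at the endpoints to translate the potential bound back into a statement about $E(D)$ alone. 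Your appeal to ``$F(C_n)=F(C_m)=2$ bounds the number of $C$-to-$D$ crossing edges'' does not substitute for this: an absorption of a component $A$ with $E(A)=0$ (the generic case, since a component with $E(A)<0$ is $3$-regular or has an eventual cut vertex and the game is then already decided) contributes nothing to either ledger, so nothing in your accounting prevents $E(D_m)=y+2t$ after $t$ isolated--isolated moves by Player 1. There is also a transient the summation must absorb: when $C$ is type $\mathcal{H}$ Player 2's option-(e) move only transfers freedom ($\Delta(F(C)+E(D))=0$), so the potential can rise to $y+4$ immediately after time $n$ before decreasing; the paper's proof handles this with explicit parity bookkeeping.

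Second, and more importantly, nothing in your proposal produces the \emph{strict} inequality. The paper's source of strictness is Lemma \ref{avoid type H}: for $C$ to become type $\mathcal{H}$ again at time $m$ without an eventual cut vertex, Player 1 must at some intermediate time have drawn an edge \emph{inside} $C$ --- a move worth $-2$ to the potential rather than the $+2$ Player 1 could otherwise gain --- and the parity of when this forced move occurs is precisely what separates conclusion (1) from the fallback conclusion (2). Your treatment of the equality case instead asserts that Player 2's next option-(e) move ``absorbs $A'$ or a component of comparable positive $E$''; but absorbing a component with $E=0$ decreases $E(D)$ by $0$, so this does not force $E(D_{m'})<y$, and it gives no criterion for when clause (2) rather than clause (1) applies. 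To repair the proof you would need to (i) switch to the potential $F(C)+E(D)$ and prove its monotonicity round by round, and (ii) prove and invoke the analogue of Lemma \ref{avoid type H} to locate the round where the strict drop occurs.
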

\begin{lemma}
Suppose that Player 2 draws an edge within $C_{2n-1}$. Then,
\begin{equation} 
F(C_{2n+1}) + E(D_{2n+1}) \leq F(C_{2n-1}) + E(D_{2n-1}) 
\label{eq: weakly decreasing} 
\end{equation}
with equality attained if and only if Player 1 draws an edge between two isolated vertices.
\label{F+E non-increasing}
\end{lemma}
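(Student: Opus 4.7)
The plan is to track the single quantity $\Phi(G) := F(C) + E(D)$ across the two moves between $G_{2n-1}$ and $G_{2n+1}$, namely Player 2's move at time $2n$ and Player 1's move at time $2n+1$. Player 2 draws an edge with both endpoints in $C_{2n-1}$, so the vertex partition $C \sqcup D$ is unchanged and exactly two vertices in $C$ each have their degree raised by one. Hence $F(C_{2n}) = F(C_{2n-1}) - 2$ and $E(D_{2n}) = E(D_{2n-1})$, giving $\Phi(G_{2n}) = \Phi(G_{2n-1}) - 2$.

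Next I would do a case analysis on Player 1's move at time $2n+1$, splitting according to where its two endpoints lie relative to the current partition: (i) both isolated (not in $G_{2n}$); (ii) one isolated and one in $C_{2n}$; (iii) one isolated and one in $D_{2n}$; (iv) both in $C_{2n}$; (v) both in the same component of $D_{2n}$; (vi) in two distinct components of $D_{2n}$; (vii) one in $C_{2n}$ and one in some component $A$ of $D_{2n}$. Using the per-component identity $E(A) = F(A) - 2$ together with $E(D) = F(D) - 2k$, where $k$ is the number of components of $D$, each case reduces to a short calculation giving respective changes in $\Phi$ of $+2, +1, +1, -2, -2, 0, 0$. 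For instance, in case (i) a new two-vertex component joins $D$ with $F = 4$ and $E = 2$; in cases (ii) and (iii) the new degree-$1$ vertex contributes $+2$ to $F$ while the existing endpoint loses $1$, so $F$ of its host component rises by $1$.

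The only subtle cases are (vi) and (vii), where components merge: the new edge decreases $F$ of the merged component by $2$, but the number of components of $D$ also drops by one, which (because $E$ carries a $-2k$ term) contributes a compensating $+2$. In (vii) this identity is what makes the transfer of the absorbed component $A$ from $D$'s bookkeeping into $C$'s bookkeeping net to zero: $F(C)$ gains $F(A) - 2$, while $E(D)$ loses $E(A) = F(A) - 2$.

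Combining Player 2's contribution of $-2$ with Player 1's contribution of at most $+2$, the net change $\Phi(G_{2n+1}) - \Phi(G_{2n-1})$ lies in $\{0,-1,-1,-2,-2,-4,-4\}$, which gives the inequality (\ref{eq: weakly decreasing}) and identifies case (i) as the unique equality case; that is, equality holds iff Player 1's move connects two previously isolated vertices. The argument is essentially careful bookkeeping, and the only place requiring care is the component-counting in the definition of $E(D)$ so that merges cancel correctly.
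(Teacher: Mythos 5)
Your proposal is correct and follows essentially the same potential-tracking argument as the paper: Player 2's move drops $F(C)+E(D)$ by exactly $2$, and a case analysis on where Player 1's edge lands shows that move can recover at most $+2$, with $+2$ attained only when both endpoints are isolated vertices. You are in fact slightly more careful than the paper's own table, which does not separately list the case of an edge joining two \emph{distinct} components of $D$ (net change $0$ by the component-count cancellation you describe) from an edge within a single component of $D$ (net change $-2$); this omission does not affect the paper's conclusion, and your accounting confirms the inequality and the equality characterization in all cases.
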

Let $S = \{n \mid C_n \text{ is type } H \}$. We can assume that $C_n$ does not have an eventual cut vertex for all $n \in S$, otherwise the claim is satisfied. Enumerate $S$ as $m = s_1, \ldots, s_L = M$.
Let $F(C_0) + E(D_0) = k$. By Lemma \ref{E decreases} and Lemma \ref{F+E non-increasing}, we can assume that $|S| \leq 2k-2$. Otherwise, applying Lemma \ref{E decreases} repeatedly, we get that
\[ E(D_M) \leq E(D_{s_{L-2}}) -1 \leq \cdots \leq E(D_{s_{L-(2k-2)}}) - (k-1) \leq E(D_m) - (k-1) \]
Lemma \ref{F+E non-increasing} implies that $F(C_m) + E(D_m) \leq k$.
This combined with the fact that $F(C_M) = F(C_m) = 2$ implies that
\begin{equation}
F(C_M) + E(D_M) \leq F(C_m) + E(D_m) - (k-1) \leq 1
\label{F+E less than 1}
\end{equation}
Note that if $F(C_M) \leq 1$, then $C_M$ is either 3-regular or has an eventual cut vertex. Similarly if $E(D_M) < 0$, then $D_M$ has a component that is either 3-regular or has an eventual cut vertex. Hence, \eqref{F+E less than 1} implies that some component in $G_M$ is 3-regular or has an eventual cut vertex.

Now, if for any $n \in S$, $E(D_n) = 0$, then (i) is satisfied. Assume that $E(D_n) \neq 0$ for all $n \in S$.
Since $M$ was maximal, then $C_n$ will not be type $\mathcal{H}$ for any $n > M$.

Fix $n$ so that $2n-1 > M$ and suppose $C_{2n-1}$ is not already 3-regular or has an eventual cut vertex. Since $C_{2n-1}$ is not type $\mathcal{H}$, then Player 2's strategy is always to draw an edge within $C_{2n-1}$.
Thus, by Lemma \ref{F+E non-increasing}, we have that \eqref{eq: weakly decreasing} holds for all $n$ with $2n-1 > M$ such that $C_{2n-1}$ is not 3-regular or has an eventual cut vertex.

Let $K = F(C_{M+1}) + E(D_{M+1})$.
Choose $N > M + 2 K$. Define the sequence $(a_n) = F(C_{2n-1}) + E(D_{2n-1})$ for $n$ with $M < 2n-1 < N$. 
If $C_{2n-1}$ is 3-regular or has an eventual cut vertex, then either (ii) or (iii) is satisfied. Assuming not, then note that $(a_n)$ is a non-increasing sequence of integers with initial value $\leq K$.
This implies that either there exists $n$ so that $a_n < 0$, or $a_n$ is constant at least $K$ times.

In the latter case, since equality in \eqref{eq: weakly decreasing} is attained iff Player 1 draws an edge between two isolated vertices, then $E(D_{2n-1})$ is strictly increasing at least $K$ times.
In order for $a_n$ to remain constant, then $F(C_{2n-1})$ must be strictly decreasing at least $K$ times. But this is impossible, since $F(C_{2n-1})$ is a non-negative number for all $n$ and $F(C_{2n-1}) \leq K$. Thus, we must be in the case $a_n < 0$ for some $n$.

If $a_n < 0$, then this implies that $E(D_{2n-1}) < 0$, since $F(C_{2n-1}) \geq 0$ for all $n$. This in turn implies that $D_{2n-1}$ has a component that is either 3-regular or has an eventual cut vertex, and so either (ii) or (iii) holds in our claim.

Thus, we see that if (i) is not satisfied, then (ii) or (iii) must be satisfied. \end{proof}

Before we prove Claim \ref{tree of cases}, thereby finishing the proof of Theorem \ref{general no Hamilton}, we introduce a few more concepts.
\begin{definition}
Let $G$ be a graph. We say that $G$ is \emph{type $\mathcal{A}$} if $G$ has a component that is a single edge, and every other component of $G$ is 3-regular except for two vertices of degree 2 that are not adjacent.
\end{definition}
\begin{definition}
Let $G$ be a connected graph. We say that $G$ is \emph{type $\mathcal{B}$} if $G$ is 3-regular except for three vertices of degree 2, exactly two of which are adjacent.
\end{definition}

\begin{figure}[h]
\centering
\begin{subfigure}[h]{0.3\textwidth}
\centering
\begin{tikzpicture}
	\typeA{0,0}{\major}{\minor}
\end{tikzpicture}
\subcaption{Type $\mathcal{A}$}
\end{subfigure}
\hspace{3cm}
\begin{subfigure}[h]{0.3\textwidth}
\centering
\begin{tikzpicture}
	\typeB{2,0}{\major}{\minor}
\end{tikzpicture}
\subcaption{Type $\mathcal{B}$}
\end{subfigure}
\caption{Depictions of Type $\mathcal{A}$ and Type $\mathcal{B}$ graphs.}
\label{types A and B ex}
\end{figure}
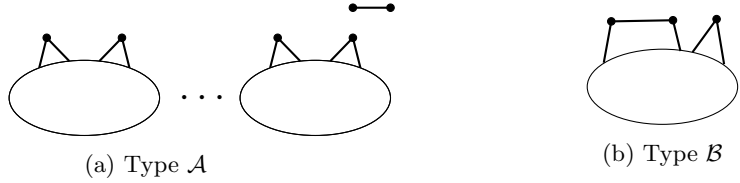 

\begin{proof}[Proof of Claim \ref{tree of cases}]
We case Player 2's strategy based on whose turn it is when $C_n$ is type $\mathcal{H}$ and $E(D_n) = 0$. First note that if $E(D_n) = 0$, then either $E(D_n^j) = 0$ for all components $D_n^j \subseteq D_n$, or there is a component $D_n^j$ with $E(D_n^j)<0$. In the latter case, this component has either 0 or 1 degrees of freedom, in particular it has an eventual cut vertex or is 3-regular, and so the claim is satisfied.

Thus, we consider the case where $E(D_n^j)=0$ for all components $D_n^j \subseteq D$. Thus, there are two degrees of freedom in each component, i.e. there are either two vertices of degree 2, or one vertex of degree one in $D_n^j$. We can assume each component has two vertices of degree 2, since one vertex of degree 1 is an eventual cut vertex, and so the claim is satisfied.

Suppose it is Player 2's turn. Let $v, w \in C_n$ be the vertices of degree 2, and let $p_j, q_j \in D_n^j$ be the vertices of degree 2.
Player 2's strategy is as follows:
\[ \begin{tabular}{l | l}
State of $G_n$ & Player 2's move \\ \hline
(a) If $D_n$ is not empty. & draw an edge from $v$ to $p_j$ for some $j$. \\
(b) If $D_n$ is empty. & Follow Figure \ref{tree of cases P2}.
\end{tabular}
\]
We show in each case that Player 2's strategy is a winning strategy:
\begin{enumerate}[(a)]
\item Observe that if an edge is drawn from $w$ to $q_j$, then this creates a 3-regular component. As Player 1 wants to avoid this, Player 1 must draw an edge off of either $w$ or $q_j$ to some vertex $a$, say without loss Player 1's move is $w \sim a$. If $a$ is a free vertex, then Player 2 can now draw an edge from $q_j$ to $a$, which makes $a$ an eventual cut vertex. Otherwise, $a$ is in some component $D_n^i$. However, we know that all components $D_n^i$ are 3-regular except for two vertices of degree 2, Thus there is a vertex $b$ of degree 2 in $D_n^i$. Drawing an edge between $q_j$ and $b$ makes a 3-regular component, which consists of the vertices from $C_n, D_n^i$, and $D_n^j$. Hence, the claim is satisfied.
\item Following Figure \ref{tree of cases P2}, we see that the end result is that there is $m > n$ such that $G_m$ is either type $\mathcal{A}$ or type $\mathcal{B}$. Lemmas \ref{type A} and \ref{type B}, which we prove in section \ref{case studies}, show that there is $m' > m$ such that $G_{m'}$ has a component that is 3-regular or has an eventual cut vertex, and hence the claim is satisfied.
\end{enumerate}
\begin{lemma}
Suppose that for some $n$, $G_n$ is type $\mathcal{A}$. Then, there exists $m > n$ such that $G_m$ has a component that is 3-regular or has an eventual cut vertex.
\label{type A}
\end{lemma}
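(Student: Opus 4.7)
The plan is to give Player~2's strategy by case analysis on whose turn it is in $G_n$, and then on Player~1's next move in the remaining case. Let the single-edge component of $G_n$ be $\{a,b\}$ (with $a\sim b$) and let $D^1,\ldots,D^k$ be the other components, each with two non-adjacent degree-$2$ vertices $p_j,q_j$. The underlying strategic idea is that for each $j$, the edge $p_j\sim q_j$ alone completes $D^j$ into a $3$-regular component, and a two-move sequence of the form $a\sim p_j$ followed by $a\sim q_j$ forces $b$ to become an eventual cut vertex, since after these moves the merged component $\{a,b\}\cup D^j$ is $3$-regular except for $b$, whose sole neighbor $a$ lies in the all-degree-$3$ part.

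If it is Player~2's turn at $G_n$ (and $k\geq 1$), Player~2 plays $p_1\sim q_1$ immediately, so $G_{n+1}$ has a $3$-regular component and $m=n+1$ works. If it is Player~1's turn, I would enumerate Player~1's possible moves together with matching responses for Player~2: (i)~if Player~1 plays $p_j\sim q_j$, that move itself produces a $3$-regular component; (ii)~if Player~1 plays $a\sim p_j$, Player~2 plays $a\sim q_j$, and the merged $\{a,b\}\cup D^j$ is $3$-regular except for $b$ (of degree $1$), so $b$ is an eventual cut vertex, and the moves $a\sim q_j$, $b\sim p_j$, $b\sim q_j$ are handled analogously; (iii)~if Player~1 plays an edge between two outer components $D^i$ and $D^j$ (any of the four combinations of endpoints from $\{p_i,q_i\}\times\{p_j,q_j\}$), Player~2 plays the closing edge between the remaining two degree-$2$ vertices, producing a $3$-regular merged component; (iv)~if Player~1 plays $p_j\sim v$ (or $q_j\sim v$) for an isolated vertex $v$, Player~2 plays the opposite edge, leaving $v$ with degree $2$ and both neighbors of degree $3$, so $v$ is an eventual cut vertex; (v)~if Player~1 plays an edge disjoint from every $D^j$ (such as $a\sim v$, $b\sim v$, or $v\sim w$ for isolated $v,w$), the $D^j$ are untouched and Player~2 plays $p_1\sim q_1$. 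In every sub-case Player~2 wins within two further moves, so $m\leq n+2$.

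The main obstacle is the case bookkeeping: one must check that these cases exhaust Player~1's options, verify that each response edge is still legal (neither endpoint at degree $3$ after Player~1's move), and confirm that the resulting component really satisfies the definition of $3$-regular or of an eventual cut vertex---in particular, that no stray low-degree vertex lingers in the residual component of $G\setminus x$ for the claimed cut vertex $x$. A mild corner case is $k=0$ (the graph is a single edge plus isolated vertices), for which the direct strategies above do not apply; however, this case does not arise in the intended application from Claim~\ref{tree of cases}, since any type-$\mathcal{H}$ component contains at least six vertices that must persist into the resulting type-$\mathcal{A}$ state as part of some $D^j$.
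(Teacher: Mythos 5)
Your proposal is correct and follows essentially the same strategy as the paper: Player~2 aims to close a pair of non-adjacent degree-2 vertices $p_j,q_j$ into a 3-regular component, and whenever Player~1 interferes by attaching one of them (or $a$, $b$) to some vertex $v$ of degree at most 2, Player~2 attaches the partner vertex to $v$ as well, producing an eventual cut vertex (your identification of $b$ rather than $a$ as the cut vertex in case~(ii) is an equally valid choice). Your case enumeration is somewhat more explicit than the paper's, and your remark on the degenerate $k=0$ case is a reasonable extra precaution, but the underlying argument is the same.
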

\begin{lemma}
Suppose that for some $n$, $G_n$ is type $\mathcal{B}$. Then, there exists $m > n$ such that $G_m$ has a component that is 3-regular or has an eventual cut vertex.
\label{type B}
\end{lemma}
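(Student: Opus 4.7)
The plan is to case on whose turn it is next, and in the harder case (Player~1 to move) to further sub-case on Player~1's move. Denote by $u, v, w$ the three degree-2 vertices of $G_n$, with $u \sim v$, so that $w$ is non-adjacent to both by the definition of type~$\mathcal{B}$. The guiding observation, which I would isolate as a small lemma or remark, is that whenever a connected component $C$ of $G$ is 3-regular except at a single vertex $x$ of degree 1 or 2, then $x$ is an eventual cut vertex: every connected component of $C \setminus x$ consists entirely of vertices of degree 3 in $G$ and contains at most two of $x$'s neighbors. Player~2's strategy is to manoeuvre into such a configuration, either directly or after a short continuation invoking Lemma~\ref{type A}.

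If it is Player~2's turn, Player~2 plays $u \sim w$ (legal since $u, w$ are non-adjacent and both degree 2), leaving $v$ as the sole degree-2 vertex of the component, hence an eventual cut vertex; we are done with $m = n+1$. If instead it is Player~1's turn, I would first enumerate Player~1's legal moves from a type~$\mathcal{B}$ state: inside the component only $u \sim w$ and $v \sim w$ are legal (every other pair is either already adjacent or contains a degree-3 vertex), otherwise Player~1 draws an edge from one of $u, v, w$ to an isolated vertex, or between two isolated vertices. If Player~1 plays $u \sim w$ or $v \sim w$, the move itself produces a component with one remaining degree-2 vertex and we are done. If Player~1 plays $u \sim a$ (respectively $v \sim a$) with $a$ an isolated vertex, Player~2 responds $v \sim w$ (respectively $u \sim w$); the component is then 3-regular except for the pendant $a$, which is an eventual cut vertex. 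If Player~1 plays an edge between two isolated vertices, Player~2 responds $u \sim w$, once again leaving $v$ as the sole low-degree vertex of the component.

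The remaining, trickier branch is when Player~1 plays $w \sim a$ for some isolated $a$. My plan is for Player~2 to respond $u \sim a$, which leaves the component 3-regular except for the two non-adjacent degree-2 vertices $v$ and $a$, the rest of the graph being isolated vertices. I then sub-case on Player~1's next move. If Player~1 plays $v \sim b$ or $a \sim b$ with $b$ isolated, Player~2 closes off by connecting the remaining degree-2 vertex in $\{v, a\}$ to the same $b$; the component then has $b$ as its unique degree-2 vertex, and so $b$ is an eventual cut vertex. Player~1 will avoid $v \sim a$, which would immediately 3-regularize the component. The only remaining option for Player~1 is to draw an edge between two isolated vertices; but then the global graph is exactly type~$\mathcal{A}$ (one component that is a single edge, plus a component 3-regular except at two non-adjacent degree-2 vertices), and Lemma~\ref{type A} completes the proof.

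The main obstacle is precisely the $w \sim a$ branch: unlike every other Player~1 move from type~$\mathcal{B}$, it cannot be neutralized by a single Player~2 response, so the analysis must be carried through one more exchange and, in one sub-sub-case, reduced to Lemma~\ref{type A}. The rest of the proof is essentially a mechanical enumeration; the only routine verifications are that each proposed response is legal (no degree exceeds 3 and no edge is repeated) and, in each terminal configuration, that the designated vertex satisfies the stated definition of eventual cut vertex.
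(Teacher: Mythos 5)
Your proposal is correct and follows essentially the same line as the paper: an immediate win when Player 2 moves first, the observation that every Player 1 move other than $w\sim a$ (the paper's $x\sim y$) loses at once, the response of joining $a$ to one vertex of the adjacent pair to reach a component with two non-adjacent degree-2 vertices, and then one final exchange. The only cosmetic difference is that in the ``two isolated vertices'' leaf you invoke Lemma~\ref{type A}, whereas the paper just plays the same closing move directly.
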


Now, if it is Player 1's turn, then Player 2's strategy is as follows: \\
\centering
\begin{tabular}{p{5.0cm} | p{5.7cm}}
Player 1's move & Player 2's move \\ \hline
(a) Draw an edge from $p_i$ to $p_j$. & Draw an edge from $q_i$ to $q_j$. \\
(b) Draw an edge from $v$ to $p_i$. & Draw an edge from $w$ to $q_i$ \\
(c) Draw an edge from $p_i$ to $q_i$. & Draw an edge between two isolated vertices. \\
(d) Draw an edge from $v$ to an isolated vertex $x$. & Draw an edge from $w$ to $x$. \\
(e) Draw an edge from $p_i$ to an isolated vertex $x$. & Draw an edge from $q_i$ to $x$. \\
(f) Draw an edge between two isolated vertices $x$ and $y$. & Draw an edge from $v$ to $p_i$ if there is a component $D_n^i$. Otherwise follow Figure \ref{tree of cases P1}. \\
\end{tabular} \\
Again, we show in each case that Player 2's strategy is a winning strategy:
\begin{enumerate}[(a)]
\setlength{\itemsep}{.07em}
\item This results in a 3-regular component that consists of $D_n^i$ and $D_n^j$.
\item This results in a 3-regular component that consists of $C_n$ and $D_n^i$.
\item Once Player 1 draws an edge from $p_i$ to $q_i$, then $D_n^i$ is already 3-regular, and so Player 2 can make any move.
\item This results in $x$ becoming an eventual cut vertex.
\item This results in $x$ becoming an eventual cut vertex.
\item Drawing an edge from $v$ to $p_i$ if there is a component $D_n^i$ will result in a type $\mathcal{A}$ graph. Following Figure \ref{tree of cases P1} will result in either a type $\mathcal{B}$ graph or will take us to Figure \ref{tree of cases P2}. Player 2 will win from any of these positions.
\end{enumerate}
\end{proof}


	\newcommand{\typeD}[3]{\ellipse{#1}{#2}{#3}
	\coordinate (bleft) at ($(#1)+(-.78*\major, .64*\minor)$);
	\coordinate (bmiddle) at ($(#1)+(.24*\major, .95*\minor)$);
	\coordinate (bright) at ($(#1)+(.65*\major, .78*\minor)$);
	\coordinate (tleft) at ($(bleft)+(.1, .55cm)$);
	\coordinate (tmiddle) at ($(bmiddle)+(-.1, .41cm)$);
	\coordinate (tright) at ($(bright)+(0, .5cm)$);
	\draw [thick, black] (bleft) -- (tleft);
	\draw [thick, black] (bmiddle) -- (tmiddle);
	\draw [thick, black] (bright) -- (tright);
	\draw [thick, black] (tleft) -- (tmiddle);
	\node[draw,circle,inner sep = 1pt,fill] at (tleft) {};
	\node[draw,circle,inner sep = 1pt,fill] at (tmiddle) {};
	\node[draw,circle,inner sep = 1pt,fill] at (tright) {};
	}
	\newcommand{\typeE}[3]{\ellipse{#1}{#2}{#3}
	\typeH{#1}{#2}{#3}
	\coordinate (tmiddle) at ($(tleft)!0.5!(tright)$);
	\coordinate (v1) at ($(tright) + (0,.5)$);
	\coordinate (v2) at ($(v1) + (.5,0)$);
	\draw [thick, black] (v1) -- (v2);
	\node[draw,circle,inner sep = 1pt,fill] at (tmiddle) {};
	\node[draw,circle,inner sep = 1pt,fill] at (v1) {};
	\node[draw,circle,inner sep = 1pt,fill] at (v2) {};	
	}


\begin{figure}[H]
{ \centering
\resizebox{.61\height}{!}{
\begin{tikzpicture}
	\typeH{0,3}{\major}{\minor}
	\draw [thick, black, ->] (\major+.2cm, 3) -- (\major+1.8cm, 3) node [above, midway] {P2};
	\newcommand{\drawgenII}[3]{
	\typeH{#1}{#2}{#3}
	\coordinate (v1) at  ($(tright)+(0,.5)$);
	\node[draw,circle,inner sep = 1pt,fill] at (v1) {};
	\draw [thick, black] (tright) -- (v1);
	}
	\drawgenII{4,3}{\major}{\minor}
	\draw [thick, black, dotted] (tleft) -- (v1);
	\draw [thick, black, ->] (4cm+\major+.2cm, 3) -- (4cm+\major+1.8cm, 3) node [above, midway] {P1} node [below, midway] {(forced)};
	\newcommand{\drawgenIII}[3]{
	\drawgenII{#1}{#2}{#3}
	\coordinate (v2) at  ($(tleft)+(0,.5)$);
	\node[draw,circle,inner sep = 1pt,fill] at (v2) {};
	\draw [thick, black] (tleft) -- (v2);
	\coordinate (iso) at ($(#1)+(\major+2,0)$);
	\node[right, scale = 1.5] at (iso) {$\simeq$} ;
	\typeX{$(#1)+(3,0)$}{#2}{#3}
	}
	\drawgenIII{8,3}{\major}{\minor}
	\draw [thick, black, ->] (11cm-.3cm, 3cm-\minor-.3cm) -- (6cm+\major+.3cm, .5cm) node [above, sloped, midway] {P2};
	\newcommand{\drawgenIV}[3]{
	\typeX{#1}{#2}{#3}
	\coordinate (v1) at ($(tright)+(0, .5)$);
	\node[draw,circle,inner sep = 1pt,fill] at (v1) {};
	\draw [thick, black] (tright) -- (v1);
	}
	\drawgenIV{6,0}{\major}{\minor}
	\draw [thick, black, ->] (6cm, -\minor-.2cm) -- (\major+.3cm, -2.5) node [above, sloped, midway] {P1};
	\draw [thick, black, ->] (6cm, -\minor-.2cm) -- (3cm+\major+.2cm, -2.8) node [above, sloped, midway] {P1};
	\draw [thick, black, ->] (6cm, -\minor-.2cm) -- (6cm, -2.8) node [right, midway] {P1};
	\draw [thick, black, ->] (6cm, -\minor-.2cm) -- (11cm-\major, -2.8) node [above, sloped, midway] {P1};
	\newcommand{\drawgenVa}[3]{
	\drawgenIV{#1}{#2}{#3}
	\coordinate (v2) at ($(v1)+(0, .5)$);
	\node[draw,circle,inner sep = 1pt,fill] at (v2) {};
	\draw [thick, black] (v1) -- (v2);
	}
	\drawgenVa{0,-4}{\major}{\minor}
	\draw [black, dashed] (tright) arc (270:450: .5cm);
	\draw [thick, black, ->] (0, -4cm-\minor-.3cm) -- (3cm-\major-.2cm, -7cm+\minor+.3cm) node [above, sloped, midway] {P2};
	\newcommand{\drawgenVb}[3]{
	\drawgenIV{#1}{#2}{#3}
	\coordinate (v2) at ($(tleft)+(0, .5)$);
	\node[draw,circle,inner sep = 1pt,fill] at (v2) {};
	\draw [thick, black] (tleft) -- (v2);
	}
	\drawgenVb{3,-4}{\major}{\minor}
	\draw [black, dashed] (tleft) -- (v1);
	\draw [thick, black, ->] (3, -4cm-\minor-.3cm) -- (3cm, -7cm+\minor+.7cm) node [right, midway] {P2};
	\newcommand{\drawgenVc}[3]{
	\drawgenIV{#1}{#2}{#3}
	\coordinate (v2) at ($(tright)+(.5, 0)$);
	\node[draw,circle,inner sep = 1pt,fill] at (v2) {};
	\draw [thick, black] (tright) -- (v2);
	}
	\drawgenVc{6,-4}{\major}{\minor}
	\draw [black, dashed] (v2) -- (v1);
	\draw [thick, black, ->] (6, -4cm-\minor-.3cm) -- (3cm+\major+.2cm, -7cm+\minor+.3cm) node [above, sloped, midway] {P2};
	\newcommand{\drawgenVd}[3]{
	\drawgenIV{#1}{#2}{#3}
	\coordinate (v2) at ($(v1)+(.5, 0)$);
	\coordinate (v3) at ($(v1)+(1, 0)$);	
	\node[draw,circle,inner sep = 1pt,fill] at (v2) {};
	\node[draw,circle,inner sep = 1pt,fill] at (v3) {};
	\draw [thick, black] (v2) -- (v3);
	}
	\drawgenVd{11,-4}{\major}{\minor}
	\draw [black, dashed] (tleft) -- (v1);
	\draw [thick, black, ->] (11, -4cm-\minor-.3cm) -- (11cm, -7cm+\minor+.9cm) node [left, midway] {P2};
	\typeD{3,-7}{\major}{\minor} 
	\typeE{11,-7}{\major}{\minor} 
	
	\draw [thick, black, dotted] (tright) arc (0:180: .5cm);
	\draw [thick, black, ->] (3, -7cm-\minor-.3cm) -- (0cm+\major, -11cm+\minor+1.2cm) node [above, sloped, midway] {P1};
	\draw [thick, black, ->] (3, -7cm-\minor-.3cm) -- (3cm, -11cm+\minor+1cm) node [right, midway] {P1};
	\draw [thick, black, ->] (3, -7cm-\minor-.3cm) -- (6cm-\major-.2cm, -11cm+\minor+1cm) node [above, sloped, midway] {P1};
	\draw [thick, black, ->] (11, -7cm-\minor-.3cm) -- (9cm+.2cm, -11cm+\minor+1cm) node [above, sloped, midway] {P1 (forced)};
	\draw [thick, black, ->] (11, -7cm-\minor-.3cm) -- (12cm, -11cm+\minor+1cm) node [above, sloped, midway] {P1 (forced)};
	\newcommand{\drawgenVIIa}[3]{
	\typeD{#1}{#2}{#3}
	\coordinate (v1) at ($(tright)+(0, .5)$);
	\coordinate (v2) at ($(tright)+(.5, .5)$);
	\node[draw,circle,inner sep = 1pt,fill] at (v1) {};
	\node[draw,circle,inner sep = 1pt,fill] at (v2) {};
	\draw [thick, black] (v1) -- (v2);
	}
	\drawgenVIIa{0,-11}{\major}{\minor}
	\draw [black, dashed] (tmiddle) -- (tright);
	\draw [thick, black, ->] (0, -11cm-\minor-.3cm) -- (3cm, -15cm+\minor+.7cm) node [above, sloped, midway] {P2};
	\newcommand{\drawgenVIIb}[3]{
	\typeD{#1}{#2}{#3}
	\coordinate (v1) at ($(tright)+(0, .5)$);
	\node[draw,circle,inner sep = 1pt,fill] at (v1) {};
	\draw [thick, black] (tright) -- (v1);
	}
	\drawgenVIIb{3,-11}{\major}{\minor}
	\draw [black, dashed] (tmiddle) -- (v1);
	\draw [thick, black, ->] (3, -11cm-\minor-.3cm) -- (8cm-.12cm, -15cm+\minor+.7cm)  node [above, sloped, pos=.4] {P2};
	\newcommand{\drawgenVIIc}[3]{
	\typeD{#1}{#2}{#3}
	\coordinate (v1) at ($(tmiddle)+(0, .5)$);
	\node[draw,circle,inner sep = 1pt,fill] at (v1) {};
	\draw [thick, black] (tmiddle) -- (v1);
	}
	\drawgenVIIc{6,-11}{\major}{\minor}
	\draw [black, dashed] (tright) -- (v1);
	\draw [thick, black, ->] (6, -11cm-\minor-.3cm) -- (8cm, -15cm+\minor+.7cm) node [above, sloped, pos=.3] {P2};
	\newcommand{\drawgenVIId}[3]{
	\typeE{#1}{#2}{#3}
	\draw [thick, black] (tright) -- (v1);
	}
	\drawgenVIId{9,-11}{\major}{\minor}
	\draw [black, dashed] (tmiddle) -- (v2);
	\draw [thick, black, ->] (9, -11cm-\minor-.3cm) -- (8cm+.1cm, -15cm+\minor+.7cm) node [right, pos=.6] {P2};
	\newcommand{\drawgenVIIe}[3]{
	\typeE{#1}{#2}{#3}
	\coordinate (v3) at ($(tleft)+(0, .5)$);
	\node[draw,circle,inner sep = 1pt,fill] at (v3) {};
	\draw [thick, black] (tleft) -- (v3);
	}
	\drawgenVIIe{12,-11}{\major}{\minor}
	\draw [black, dashed] (tmiddle) -- (v3);
	\draw [thick, black, ->] (12, -11cm-\minor-.3cm) -- (3cm, -15cm+\minor+.7cm) node [above, sloped, pos=.2] {P2};
	\typeAEdge{3,-15}{\major}{\minor} 
	\typeB{8,-15}{\major}{\minor} 
\end{tikzpicture}
}
\caption{Player 2's strategy for when $C_n$ is type $\mathcal{H}$, $D_n$ is empty, and it is Player 2's turn. The dashed line represents the move that Player 2 will make. Note that not all of Player 1's potential moves are considered, as some moves lead immediately to an eventual cut vertex or 3-regular component once Player 2 makes the dotted line move, thereby forcing Player 1 into the depicted options.}
\label{tree of cases P2}
}
\end{figure}


\begin{figure}[h]
\resizebox{1\linewidth}{!}{
\begin{tikzpicture}
	\typeH{0,3}{\major}{\minor}
	\draw [thick, black, ->] (\major+.2cm, 3) -- (\major+2.8cm, 3) node [above, midway] {P1} node [below, midway] {(forced)};
	\newcommand{\drawgenII}[3]{
	\typeH{#1}{#2}{#3}
	\coordinate (v1) at  ($(tright)+(0,.5)$);
	\coordinate (v2) at ($(tright)+(.5,.5)$);
	\node[draw,circle,inner sep = 1pt,fill] at (v1) {};
	\node[draw,circle,inner sep = 1pt,fill] at (v2) {};
	\draw [thick, black] (v1) -- (v2);
	}
	\drawgenII{5,3}{\major}{\minor}
	\draw [thick, black, ->] (5cm+\major+.2cm, 3) -- (5cm+\major+2.8cm, 3) node [above, midway] {P2};
	\newcommand{\drawgenIII}[3]{
	\drawgenII{#1}{#2}{#3}
	\draw [thick, black] (tright) -- (v1);
	}
	\drawgenIII{10,3}{\major}{\minor}
	\draw [thick, black, dotted] (tleft) -- (v1);
	\draw [thick, black, ->] (10cm-.3cm, 3cm-\minor-.3cm) -- (\major+.7cm, 3cm-\minor-1.2cm) node [above, sloped, midway] {P1};
	\draw [thick, black, ->] (10cm-.3cm, 3cm-\minor-.3cm) -- (5cm+\major+.3cm, 3cm-\minor-1.7cm) node [below, sloped, midway] {P1};
	\draw [thick, black, ->] (10cm-.3cm, 3cm-\minor-.3cm) -- (10cm-.3cm, 3cm-\minor-1.5cm) node [right, midway] {P1};
	\newcommand{\drawgenIVa}[3]{
	\drawgenIII{#1}{#2}{#3}
	\coordinate (v3) at ($(tleft)+(0,.5)$);
	\node[draw,circle,inner sep = 1pt,fill] at (v3) {};
	\draw [thick, black] (tleft) -- (v3);
	}
	\drawgenIVa{0,-.5}{\major}{\minor}
	\draw [black, dashed] (v2) arc (0:180: .75cm);
	\draw [thick, black, ->] (0cm, -.5cm-\minor-.3cm) -- (2.3cm, -.5cm-\minor-.3cm-1cm) node [above, sloped, midway] {P2};
	\newcommand{\drawgenIVb}[3]{
	\drawgenIII{#1}{#2}{#3}
	\coordinate (v3) at ($(tleft)+(0,.5)$);
	\node[draw,circle,inner sep = 1pt,fill] at (v3) {};
	\draw [thick, black] (v1) -- (v3);
	}
	\drawgenIVb{5,-.5}{\major}{\minor}
	\draw [black, dashed] (v2) arc (0:180: .75cm);
	\draw [thick, black, ->] (5cm, -.5cm-\minor-.3cm) -- (2.7cm, -.5cm-\minor-.3cm-1cm) node [above, sloped, midway] {P2};
	\newcommand{\drawgenIVc}[3]{
	\drawgenIII{#1}{#2}{#3}
	\draw [thick, black] (tleft) -- (v2);
	\coordinate (iso) at ($(#1)+(\major+2,0)$);
	\node[right, scale = 1.5] at (iso) {$\simeq$} ;
	\typeH{$(#1)+(3,0)$}{#2}{#3}
	}
	\drawgenIVc{9,-.5}{\major}{\minor}
	\typeB{2.5,-3.5}{\major}{\minor}

\end{tikzpicture}
}
\caption{Player 2's strategy for when $C_n$ is type $\mathcal{H}$, $D_n$ is empty, and it is Player 1's turn. The dashed line represents the move that Player 2 will make, and the dotted line represents a forcing move from Player 2.}
\label{tree of cases P1}
\end{figure}

\section{Case Studies} \label{case studies}

In order to prove the previously used lemmas, we define the following types of graphs.
\begin{definition}
Let $G$ be a connected graph. We say that $G$ is \emph{type $\mathcal{X}$} if $G$ is 3-regular except for two vertices of degree 1 which are not adjacent.
\end{definition}

\begin{definition}
Let $G$ be a connected graph. We say that $G$ is \emph{type $\mathcal{Y}$} if $G$ is 3-regular except for one vertex of degree 1 and one vertex of degree 2 which are not adjacent.
\end{definition}

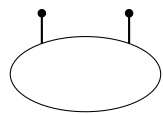
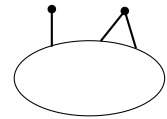
\begin{figure}[h]
\centering
\begin{subfigure}[h]{0.3\textwidth}
\centering
\begin{tikzpicture}
	\typeX{0,0}{\major}{\minor}
\end{tikzpicture}
\subcaption{Type $\mathcal{X}$}
\end{subfigure}
\hspace{3cm}
\begin{subfigure}[h]{0.3\textwidth}
\centering
\begin{tikzpicture}
	\typeY{2,0}{\major}{\minor}
\end{tikzpicture}
\subcaption{Type $\mathcal{Y}$}
\end{subfigure}
\caption{Depictions of Type $\mathcal{X}$ and Type $\mathcal{Y}$ graphs.}
\label{types X and Y ex}
\end{figure} 

\begin{proof}[Proof of Lemma \ref{E decreases}]

We use the following lemma in this proof, to be proven later in this section:
\begin{lemma}
Assume that $C_0$ is not a type $\mathcal{X}$ graph.
Suppose that there is some $n > 1$ such that $C_n$ is type $\mathcal{H}$ and $C_n$ does not have an eventual cut vertex. Then, there exists $m < n$ such that Player 1 drew an edge within $C_m$. 
\label{avoid type H}
\end{lemma}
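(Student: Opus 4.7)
The plan is to prove the contrapositive: assume Player~1 never draws an edge within $C_m$ for any $m<n$ and that $C_0$ is not of type~$\mathcal{X}$; we show $C_n$ cannot be type~$\mathcal{H}$ without an eventual cut vertex. Choose $n$ minimal with this hypothetical property and split on the parity of $n$.

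When $n$ is odd, $C_n$ arises from Player~1's move, which by assumption does not go inside $C$. So $C_n$ equals $C_{n-1}$, or $C_n$ is obtained from $C_{n-1}$ by attaching a pendant, or by merging $C_{n-1}$ with another component $D^i$. The first contradicts minimality of $n$. The second inserts a new degree-$1$ vertex, which is incompatible with $C_n$ being type~$\mathcal{H}$. For the third, a direct degree count on the new edge $a\sim b$ shows that the only ways the merged component can be type~$\mathcal{H}$ are: (i) $a$ and $b$ are the two adjacent degree-$2$ vertices, forcing $d_a=d_b=1$ before the merge; or (ii) both degree-$2$ vertices of $C_n$ lie entirely in the pre-merge $C_{n-1}$ (or entirely in $D^i$). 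In case~(i), either $a$ or the symmetric vertex becomes an eventual cut vertex of $C_n$, since removing it disconnects the side whose vertices are all degree-$3$ and $a$ has exactly one neighbor there. In case~(ii), the unique degree-$2$ vertex on the ``empty'' side of the merge is an eventual cut vertex of $C_n$ by the same reasoning. Either way, $C_n$ has an eventual cut vertex, a contradiction.

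When $n$ is even, $C_n$ arises from Player~2's move. Inspection of strategies (a)--(f) shows that only (a), (b), (c) can produce type~$\mathcal{H}$: strategy (d) is designed (via Lemma~\ref{square}) never to leave adjacent degree-$2$ vertices, and (e), (f) are only invoked when $C_{n-1}$ is already type~$\mathcal{H}$, contradicting the minimality of $n$. Each of (a), (b), (c) forces $C_{n-1}$ into one of three explicit ``pre-$\mathcal{H}$'' shapes: $C_{n-1}$ is type~$\mathcal{X}$ (for (a)); or $C_{n-1}$ has exactly one vertex $w$ of degree~$1$ and exactly two non-adjacent vertices $u,v$ of degree~$2$ with $w\sim u$ (for (b)); or $C_{n-1}$ has exactly one degree-$1$ vertex $w$ together with adjacent degree-$2$ vertices $u\sim v$ and $w\sim u$ (for (c)). The plan is now to trace back one more move: since $n-1$ is odd and Player~1 stays outside $C$, $C_{n-1}$ can only differ from $C_{n-2}$ by the three restricted move-types analyzed above, and on the other hand $C_{n-2}$ is the output of Player~2's strategy on $C_{n-3}$. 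A direct check of which of Player~2's rules (a)--(d) can produce each of the three pre-$\mathcal{H}$ shapes rules all of them out, because Player~2 only ever non-increases the count of degree-$1$ vertices and decreases or preserves the count of degree-$2$ vertices in a controlled way. Hence $C_{n-2}$ must itself already have been in a pre-pre-$\mathcal{H}$ configuration.

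Iterating this backward chain move by move (using at each step that Player~1 plays outside $C$ and Player~2 follows her prescribed strategy), we descend through a strictly shrinking history that must terminate at $C_0$, forcing $C_0$ to be type~$\mathcal{X}$ and contradicting the standing hypothesis. The main obstacle is the bookkeeping in this backward induction: several pre-$\mathcal{H}$ shapes interact with Player~2's six rules through a moderately intricate tree of cases, so the cleanest packaging is to phrase the argument as an invariant, say ``at each odd time $m$, $C_m$ is not type~$\mathcal{X}$ and does not admit either of the two (b)/(c) pre-$\mathcal{H}$ patterns,'' and prove it by induction on $m$ with base case supplied by the assumption on $C_0$. The required backward chain then collapses into a single inductive step combining Player~2's rule table with the three allowable Player~1 transitions.
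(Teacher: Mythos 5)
Your odd-$n$ analysis is correct and matches the first half of the paper's proof of Lemma~\ref{avoid type H} almost exactly: Player 1's move must merge $C_{n-1}$ with some $D^j$, and in every sub-case a degree count produces an eventual cut vertex. The genuine gap is the even case, which is where all of the content of the lemma lives. You correctly observe that, read literally, Player 2's rules (b) and (c) can themselves output a type $\mathcal{H}$ component, and you reduce the lemma to the claim that the three ``pre-$\mathcal{H}$'' shapes never occur at odd times; but you never prove that claim, deferring it to ``a direct check'' of ``a moderately intricate tree of cases.'' That deferred check is the entire lemma, and as stated it fails for the rule-(b) shape: it \emph{is} reachable without Player 1 ever playing inside $C$. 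Concretely, let $C$ be $K_4$ minus an edge (reachable under the strategy: grow a path on four vertices while $|C|<4$, close it to a $4$-cycle by rule (a), add a diagonal by rule (d)) with non-adjacent degree-$2$ vertices $b,c$, and let $D^j$ be a single edge $p\sim q$. Player 1 draws $p\sim b$, a move from $D$ to $C$ and hence not within $C$; the new $C$ has exactly one degree-$1$ vertex $q$ with $q\sim p$, $p\not\sim c$, so rule (b) fires, Player 2 draws $q\sim c$, and the result is type $\mathcal{H}$ with no eventual cut vertex. So the invariant you propose (``no odd-time $C_m$ admits the (b)/(c) pre-$\mathcal{H}$ patterns'') is not provable from the strategy table as written, and your backward descent cannot close.

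For comparison, the paper's proof never descends to $C_0$. Its Subclaim~1 asserts that only a type $\mathcal{X}$ component at time $2n-1$ forces type $\mathcal{H}$ at time $2n$; to get this it has Player 2 respond to the one-degree-$1$-plus-two-degree-$2$ configuration by joining the two degree-$2$ vertices (or the degree-$1$ vertex to the one it already touches), which is \emph{not} what rule (b) of the strategy table says --- that silent substitution is what makes the paper's version of the even case go through, and your literal reading exposes the discrepancy. The paper then closes the chain in exactly two further steps: type $\mathcal{X}$ at $2n-1$ forces type $\mathcal{Y}$ at $2n-2$ (Subclaim~2), and Player 2's strategy never outputs a type $\mathcal{Y}$ component from any predecessor with five degrees of freedom (Subclaim~3); the hypothesis on $C_0$ only handles the degenerate case $2n-2=0$. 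To repair your argument you would need either to adopt the Subclaim-1 reading of the strategy, after which only the type $\mathcal{X}$ branch survives and the chain terminates structurally after two backward steps rather than at $C_0$, or to confront directly the fact that rule (b) as written manufactures type $\mathcal{H}$ for free.
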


We can assume that $m > n+1$ is minimal such that $C_m$ is type $\mathcal{H}$ with no eventual cut vertex.
In the first two moves after $C_n$, Player 2's strategy is to draw an edge from either $C_n$ to $D_n$ or $C_{n+1}$ to $D_{n+1}$, depending on the parity of $n$, and as long as $D_n$ or $D_{n+1}$ is not empty. Otherwise, Player 2 will follow either Figure \ref{tree of cases P2} or Figure \ref{tree of cases P1}, from which an eventual cut vertex or 3-regular component will be made.
Assuming $D_n$ and $D_{n+1}$ are not empty, then Player 1 can increase $E(D_n)$ or $E(D_{n+1})$ by at most 2, which is accomplished by drawing an edge between two isolated vertices.
Noting that $F(C_n) = 2$, we have
\[ F(C_{n+2}) + E(D_{n+2}) \leq 2 + y+2 = y+4 \]
Note that if $m=n+2$, then Player 1's move was not to draw an edge between two isolated vertices, but to draw within $C_{n+1}$, in which case $D_m$ is the same as $D_n$, except with one less component. Thus, $E(D_m) < E(D_n)$. Now, if $m \neq n+2$, then since $C_m$ is type $\mathcal{H}$ with no eventual cut vertex, lemma \ref{avoid type H} guarantees the existence of $k$ with $n+2 \leq k \leq m$ such that Player 1 played within $C_{k-1}$. 
Let this $k$ be minimal. By lemma \ref{F+E non-increasing}, we know that 
\[ F(C_{k-1}) + E(D_{k-1}) \leq 
\begin{cases}
y+4  & \text{ if } n \text{ even} \\
y+2  & \text{ if } n \text{ odd } 
\end{cases}
\]
We case by the parity of $n$ as lemma \ref{F+E non-increasing} gives us that $F(C_{k-2}) + E(D_{k-2}) \leq y+4 $ for when $n+2$ is odd.
Now, once Player 1 plays within $C_{k-1}$, then we get $F(C_k) = F(C_{k-1})-2$. If $k = m$, then we have
\[ F(C_m) + E(D_m) = F(C_{k-1}) - 2 + E(D_{k-1}) \leq 
\begin{cases}
y+2 & \text{ if } n \text{ even } \\
y & \text{ if } n \text{ odd }
\end{cases}
\]
Since $F(C_m) = 2$, then this implies that $E(D_m) \leq y$ if $n$ is even, and $E(D_m) < y$ if $n$ is odd. If $k \neq m$, then Player 2's strategy is to play within $C_k$, which results in $F(C_{k+1}) = F(C_k) - 2$. 
Then, we have
\[ F(C_{k+1}) + E(D_{k+1}) = F(C_{k-1}) -4 + E(D_{k-1}) \leq y \]
Again applying lemma \ref{F+E non-increasing}, we get that $ F(C_m) + E(D_m) \leq y $ which in turn implies that $E(D_m) < y$.

Thus, we see that $E(D_m) < y$ in all cases except when both $n$ is even and $k=m$, in which case $E(D_m) \leq y$.
Suppose this is satisfied, and suppose further that there is $m' > m +1$ with $C_{m'}$ type $\mathcal{H}$ without an eventual cut vertex. Then, since $k=m$ and $k$ is odd, it must be that $E(D_{m'}) < E(D_m)$.

\end{proof}

\begin{proof}[Proof of Lemma \ref{F+E non-increasing}]
Note that Player 2's move decreased $F$ by 2 and kept $E$ constant, i.e.
\[ F(C_{2n}) + E(D_{2n}) = F(C_{2n-1}) + E(D_{2n-1}) - 2 \]
Player 1 now has several options of where to draw an edge. We keep track of $F+E$ in each option. \newline
\begin{tabular}{p{7.8cm} | l}
Player 1's move & $F(C_{2n+1}) + E(D_{2n+1})$ \\ \hline
(a) draw an edge between two isolated vertices. & $F(C_{2n}) + E(D_{2n}) + 2$ \\
(b) draw an edge from a component in $D_{2n}$ to an isolated vertex. & $F(C_{2n}) + E(D_{2n}) + 1$ \\
(c) draw an edge from $C_{2n}$ to an isolated vertex. & $F(C_{2n}) + E(D_{2n}) + 1$ \\
(d) draw an edge from a component in $D_{2n}$ to $C_{2n}$. & $F(C_{2n}) + E(D_{2n})$ \\
(e) draw an edge within $D_{2n}$. & $F(C_{2n}) + E(D_{2n}) - 2$ \\
(f) draw an edge within $C_{2n}$. & $F(C_{2n}) + E(D_{2n}) - 2$ \\[0.5em]
\end{tabular} \\
Thus, in all the options we see that 
\[ F(C_{2n+1}) + E(D_{2n+1}) \leq F(C_{2n-1}) + E(D_{2n-1}) \]
and equality is attained if and only if Player 1 performs option (a), which is to draw an edge between two isolated vertices.
\end{proof}

\begin{proof}[Proof of Lemma \ref{type A}]
Let the vertices of degree 2 in one of the components that is not a single edge be $p,q$ and let $a,b$ be the vertices of degree 1 in the single edge. If it is Player 2's turn, then Player 2's strategy is to draw an edge between $p$ and $q$, thus creating a 3-regular component. If it is Player 1's turn, first consider the case where Player 1 plays on neither $p$ nor $q$. Then Player 2's strategy is to draw edge between $p$ and $q$, which again makes a 3-regular component.

Otherwise, say Player 1 draws an edge from $p$ to a vertex $v$, where $v$ could be $a, b$, an isolated vertex, or to $p'$, where $p'$ is a vertex of degree 2 in another connected component. In the first three cases, the degree of $v$ is at most 2, so Player 2's strategy is to draw an edge from $q$ to $v$, which makes $v$ an eventual cut vertex. In the last case, Player 2's strategy is to draw an edge from $q$ to $q'$, where $q'$ is the other vertex of degree 2 in the same component as $p'$. This creates a 3-regular component.
\end{proof}

\begin{proof}[Proof of Lemma \ref{type B}]
Call the two adjacent vertices of degree 2 $p,q$ and the remaining degree 2 vertex $x$. Then observe that an edge between $p$ and $x$ makes $q$ an eventual cut vertex, and similarly an edge between $q$ and $x$ makes $p$ an eventual cut vertex. Thus, if either move is available on Player 2's turn, then Player 2 can create an eventual cut vertex. In order to avoid this, the only possible move for Player 1 is to draw an edge off of $x$ to some isolated vertex $y$. Player 2's strategy is now to draw an edge between $y$ and $p$, which results in a component with two vertices of degree 2 which are not adjacent. 

From here, Player 1 draws an edge. If the edge is on two free vertices, then Player 2 can draw an edge from $q$ to $y$, making a 3-regular component. Otherwise, say that Player 1 draws an edge from $q$ to an isolated vertex $z$. At this point, Player 2 draws an edge from $y$ to $z$, making $z$ an eventual cut vertex. 
\end{proof}

\begin{proof}[Proof of Lemma \ref{avoid type H}]
Suppose that for all $n$, Player 1 never plays within $C_n$.

Assume for contradiction that $C_{2n-1}$ is type $\mathcal{H}$ (so that Player 1 drew the edge to make it type $\mathcal{H}$) and does not have an eventual cut vertex. Consider the graph $C_{2n-2}$. First note that Player 1 cannot have drawn an edge from $C_{2n-2}$ to an isolated vertex, since then $C_{2n-1}$ would have a degree 1 vertex and hence not by type $\mathcal{H}$. So, Player 1 drew an edge from $C_{2n-2}$ to some component $D_{2n-2}^j$. Now note that $C_{2n-2}$ can have at most one degree 1 vertex, since otherwise $C_{2n-1}$ would have a degree 1 vertex. Moreover, if $C_{2n-2}$ has exactly one degree 1 vertex, then it must also have a degree 2 vertex, otherwise the degree 1 vertex would be an eventual cut vertex.

Suppose $C_{2n-2}$ has a degree 1 vertex $v$ and a degree 2 vertex $w$. If $v \sim w$, then $w$ is an eventual cut vertex, and so $C_{2n-1}$ has an eventual cut vertex. Otherwise, since $v \not \sim w$, then $C_{2n-1}$ will either have an extra degree 1 vertex or an extra degree 2 vertex, and hence not be type $\mathcal{H}$. Thus, $C_{2n-2}$ has no degree 1 vertices.

Let $v, w$ be the vertices of degree 2 in $C_{2n-1}$. Since $C_{2n-2}$ has no degree 1 vertices, then $v, w$ must also be degree 2 in $G_{2n-2}$. 
Since they are adjacent, then $v, w$ must be from the same component, either $C_{2n-2}$ or $D_{2n-2}^j$. Say $v, w \in C_{2n-2}$. Then, $D_{2n-2}^j$ only has one degree of freedom, and so $D_{2n-2}^j$ and hence $C_{2n-1}$ has an eventual cut vertex.

Thus, in all cases, $C_{2n-1}$ is either not type $\mathcal{H}$ or has an eventual cut vertex, and hence we get a contradiction. Thus, if $C_n$ is type $\mathcal{H}$ without an eventual cut vertex, then $n$ must be even. However, if Player 1 never plays within $C_n$, then we claim that $C_{2n}$ is never type $\mathcal{H}$ without an eventual cut vertex.
We prove this using the following three subclaims:
\begin{enumerate}
\setlength{\itemindent}{1cm}
\setlength{\itemsep}{-.2em}
\item[\underline{Subclaim 1}]
If $C_{2n}$ is type $\mathcal{H}$ (so that Player 2 drew the edge to make it type $\mathcal{H}$) and does not have an eventual cut vertex, then $C_{2n-1}$ must be type $\mathcal{X}$.
\item[\underline{Subclaim 2}]
If $C_{2n-1}$ is type $\mathcal{X}$, then $C_{2n-2}$ is type $\mathcal{Y}$.
\item[\underline{Subclaim 3}]
If Player 2 follows the strategy from above, then $C_{2n-2}$ will never be type $\mathcal{Y}$ for all $n \ne 0$. 
\end{enumerate}

\noindent \underline{Proof of Subclaim 1:}

Since $C_{2n-1}$ was not type $\mathcal{H}$, Player 2's move always decreases $F(C_{2n-1})$ by two. Then, the only components that would force Player 2 to make a type $\mathcal{H}$ graph must have four degrees of freedom. The possible options for $C_{2n-1}$ are:
\begin{enumerate}
\setlength{\itemsep}{-.2em}
\item 3-regular except for four vertices of degree 2.
\item 3-regular except for one vertex of degree 1 and two vertices of degree 2. 
\item 3-regular except for two vertices of degree 1 (i.e. $C_{2n-1}$ is type $\mathcal{X}$).
\end{enumerate}
In the first case, Player 2's strategy leaves two vertices of degree 2 that are not adjacent, and hence $C_{2n}$ would not be type H.

In the second case, let $u$ be the vertex of degree 1 and let $v, w$ be the vertices of degree 2. If $u$ is not adjacent to either $v$ or $w$, then Player 2's strategy is to draw the edge $u \sim v$, which leaves two vertices of degree 2 that are not adjacent, namely $u, w$, and hence $C_n$ is not type $\mathcal{H}$. If $u$ is adjacent to one of $v, w$, say $u \sim v$, then Player 2's strategy is to draw the edge $v \sim w$, if not already adjacent, and to draw the edge $u \sim w$ if $v, w$ were already adjacent. In the former case, $C_{2n}$ is not type $\mathcal{H}$, and in the latter case, $C_{2n}$ has an eventual cut vertex.

The third case forces $C_{2n}$ to be type $\mathcal{H}$. 

\noindent \underline{Proof of Subclaim 2:}

Note that under Player 2's strategy, $C_{2n-2}$ will not be type $\mathcal{X}$, since Player 2 will always draw an edge between two degree 1 vertices if at least two exist. Thus, $C_{2n-2}$ cannot be type $\mathcal{X}$, and Player 1's move must have made $C_{2n-1}$ type $\mathcal{X}$. 

Now note that $C_{2n-1}$ does not have a degree 2 vertex. Thus, $C_{2n-2}$ can have at most one degree 2 vertex, since otherwise a degree 2 vertex would be in $C_{2n-1}$. Similarly, we also have that $C_{2n-1}$ must have at least one degree 2 vertex, since otherwise drawing an edge from a degree 1 vertex in $C_{2n-2}$ to any other vertex would make the degree 1 vertex now a degree 2 vertex. Also, we can assume that $C_{2n-2}$ has at least one degree 1 vertex, otherwise $C_{2n-2}$ and hence $C_{2n}$ would have an eventual cut vertex. Thus, $C_{2n-2}$ must have at least one degree 1 vertex and exactly one degree 2 vertex, and Player 1's move must have been to draw an edge from this degree 2 vertex to some other vertex.

Let $v$ be the degree 2 vertex. If Player 1 draws an edge from $v$ to an isolated vertex, then we have $F(C_{2n-1}) = F(C_{2n-2}) + 1$, and if Player 1 draws an edge from $v$ to some component $D_{2n-2}^j \subseteq D_{2n-2}$, then we have $F(C_{2n-1}) = F(C_{2n-2}) + E(D_{2n-2}^j)$. Moreover, we can assume that $E(D_{2n-2}^j) \geq 0$, since otherwise $D_{2n-2}$ and hence $C_n$ has an eventual cut vertex. 

Thus, in either case, we have that $F(C_{2n-2}) \leq F(C_{2n-1}) = 4$. This implies that $C_{2n-1}$ cannot have more than one degree 1 vertex, and so must have exactly one degree 1 vertex and exactly one degree 2 vertex, i.e. is type $\mathcal{Y}$.

\noindent \underline{Proof of Subclaim 3:}

If $C_{2n-2}$ is type $\mathcal{Y}$, then $F(C_{2n-2}) = 3$. Since Player 2's strategy is always to draw edges within $C_{2n-3}$, it suffices to consider all components $C_{2n-3}$ such that $F(C_{2n-1})=5$. Then consider those graphs that have at least one vertex of degree 1 (otherwise $C_{2n-2}$ will not by type $\mathcal{Y}$). Either $C_{2n-1}$ has one vertex of degree 1 and three of degree 2, or it has two vertices of degree 1 and one vertex of degree 2. In the first case, according to the strategy, Player 2 connects the vertex of degree 1 to a vertex of degree 2. In the second case, the strategy says to connect the two vertices of degree 1 together. In either case, $C_{2n-2}$ is not type $\mathcal{Y}$, since $C_{2n-2}$ has no degree 1 vertices. 
\end{proof}

\begin{lemma}
Let $C$ be a component that is 3-regular except for four vertices of degree 2. Then, there is a labeling of the four vertices, $u, v, p, q$, such that $u \not \sim v$ and $p \not \sim q$.
\label{square}
\end{lemma}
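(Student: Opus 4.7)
The plan is to look at the induced subgraph $H$ of $C$ on the four degree-$2$ vertices and to reduce the lemma to the statement that the complement $\overline{H}$ (on the same four vertices) contains a perfect matching. Indeed, the two edges of such a matching supply two disjoint pairs of non-adjacent vertices, which are exactly the labelings $\{u,v\}$ and $\{p,q\}$ sought by the lemma.

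Because each of the four vertices has degree $2$ in $C$, it has degree at most $2$ in $H$; so $H$ is a graph on four vertices of maximum degree at most $2$, i.e.\ a disjoint union of paths and cycles. Up to isomorphism the only possibilities are the empty graph, a single edge, a matching of two edges, $P_3 + K_1$, $P_4$, $C_4$, and $K_3 + K_1$. For every case on this list \emph{except} $K_3 + K_1$, a perfect matching of $\overline{H}$ can be read off directly — e.g.\ in $P_4 = abcd$ one takes the non-edges $\{ac, bd\}$; in $C_4$ one takes the two diagonals; in the smaller cases there are even more choices. I would dispose of these six routine cases by a brief enumeration.

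The only real obstacle is ruling out $H \cong K_3 + K_1$, and the key observation is that the triangle case is forbidden by connectedness. Suppose for contradiction that three of the four degree-$2$ vertices, say $x, y, z$, form a triangle in $H$. Since each of $x, y, z$ already has two neighbors in $C$ inside $\{x, y, z\}$ and degree exactly $2$ in $C$, no edge of $C$ leaves $\{x, y, z\}$. Thus $\{x, y, z\}$ is a union of connected components of $C$; but $C$ is connected and contains a fourth degree-$2$ vertex outside this set, a contradiction. This eliminates the last case and finishes the argument.
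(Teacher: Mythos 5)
Your argument is correct. The reduction to a perfect matching in the complement of the induced subgraph $H$ is sound, the list of graphs on four vertices with maximum degree at most $2$ (unions of paths and cycles: $4K_1$, $K_2+2K_1$, $2K_2$, $P_3+K_1$, $P_4$, $C_4$, $K_3+K_1$) is complete, the complements of the first six all visibly contain a perfect matching, and your exclusion of the triangle case is exactly right: three mutually adjacent degree-$2$ vertices would have all their edges inside the triple, making the triangle a full component of $C$ and contradicting connectedness.

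Your route is organized differently from the paper's, though it rests on the same two facts. The paper fixes one labeled vertex $u$ and splits into three cases according to whether $u$ is adjacent to zero, one, or two of the other three degree-$2$ vertices (adjacency to all three being impossible since $\deg_C(u)=2$), exhibiting the two non-adjacent pairs directly in each case; the triangle-forces-disconnection observation appears there too, once to rule out $v,p,q$ forming a triangle and once to rule out $v \sim q$ when $u \sim v$ and $u \sim q$. Your version instead classifies the induced subgraph $H$ up to isomorphism and recasts the goal as a perfect matching in $\overline{H}$, which isolates the unique obstruction ($\overline{K_3+K_1} = K_{1,3}$ has no perfect matching) and makes it transparent why connectedness is the hypothesis doing the work. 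The cost is a longer enumeration (seven isomorphism types versus three adjacency cases); the benefit is a cleaner conceptual statement of what could go wrong and why it cannot. Either proof is complete and acceptable.
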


\begin{proof}[Proof of Lemma~\ref{square}]
Vertex $u$ can either be adjacent to none, one, or two of $v,p,q$.
Consider the following proof by cases:

\noindent \underline{Case 1:} $u$ is adjacent to none of $v, p, q$. \\
Then we also know that not all of $v,p,q$ are adjacent to each other, as this would create a triangle and thus be disconnected from $C$. Then, without loss, $p \not \sim q$ and by assumption $u \not \sim v$. 

\noindent \underline{Case 2:} $u$ is adjacent to one of $v, p, q$. \\
Without loss, say $u \sim v$. Then as $v$ is degree 2 and already adjacent to $u$, it can only be adjacent to at most one of $p,q$, say $v \sim p$. Thus $v \not \sim q$ and $u \not \sim p$. 

\noindent \underline{Case 3:} $u$ is adjacent to two of $v, p, q$. \\
Without loss, say $u \sim v$ and $u \sim q$. So, $u \not \sim p$. However, we know that $v \not \sim q$, because this would make a triangle with vertices $u,v,q$. 

Hence, in any case, we can always find a pairing such that  $u \not \sim v$ and $p \not \sim q$. \end{proof}

\section*{Acknowledgments}

We thank Wesley Pegden for suggesting the question and for helpful discussions.

\begin{bibdiv}
\begin{biblist}

\bib{BFHK}{article}{
  title={Hitting time results for {M}aker-{B}reaker games},
  author={Ben-Shimon, Sonny},
  author={Ferber, Asaf},
  author={Hefetz, Dan},
  author={Krivelevich, Michael},
  journal={Random Structures \& Algorithms},
  volume={41},
  number={1},
  pages={23--46},
  year={2012},
  publisher={Wiley Online Library}
}

\bib{BB}{article}{
title = {Almost all Regular Graphs are Hamiltonian},
journal = {European Journal of Combinatorics },
volume = {4},
number = {2},
pages = {97 -- 106},
year = {1983},
issn = {0195-6698},
doi = {http://dx.doi.org/10.1016/S0195-6698(83)80039-0},
url = {http://www.sciencedirect.com/science/article/pii/S0195669883800390},
author = {B\'ela Bollob\'as}
}

\bib{FF}{article}{
  title={Hamiltonian cycles in random regular graphs},
  author={Fenner, T.I.},
  author={Frieze, A.M},
  journal={Journal of combinatorial theory. Series B},
  volume={37},
  number={2},
  pages={103--112},
  year={1984},
  publisher={Academic Press}
}

\bib{FP}{article}{
   author              = {Alan M. Frieze}
   author={Wesley Pegden},
   title               = {The Topology of Competitively Constructed Graphs.},
   journal             = {Electr. J. Comb.},
   volume 		= {21},
   number 		= {2},
   year                = {2014},
   pages               = {P2.26},
   ee                  = {http://www.combinatorics.org/ojs/index.php/eljc/article/view/v21i2p26},
}

\bib{HKS}{article}{
  title={A sharp threshold for the {H}amilton cycle {M}aker--{B}reaker game},
  author={Hefetz, Dan},
  author={Krivelevich, Michael},
  author={Stojakovi{\'c}, Milo{\v{s}}},
  author={Szab{\'o}, Tibor},
  journal={Random Structures \& Algorithms},
  volume={34},
  number={1},
  pages={112--122},
  year={2009},
  publisher={Wiley Online Library}
}

\bib{HS}{article}{
  title={On two problems regarding the {H}amiltonian cycle game},
  author={Hefetz, Dan},
  author={Stich, Sebastian},
  journal={the electronic journal of combinatorics},
  volume={16},
  number={R28},
  pages={1},
  year={2009}
}

\bib{K}{article}{
  title={The critical bias for the {H}amiltonicity game is $(1+ o(1)) \frac{n}{\ln n}$},
  author={Krivelevich, Michael},
  journal={Journal of the American Mathematical Society},
  volume={24},
  number={1},
  pages={125--131},
  year={2011}
}

\bib{RWcubic}{article}{
  title={Almost all cubic graphs are {H}amiltonian},
  author={Robinson, Robert W.},
  author={Wormald, Nicholas C.},
  journal={Random Structures \& Algorithms},
  volume={3},
  number={2},
  pages={117--125},
  year={1992},
  publisher={Wiley Online Library}
}

\bib{RW}{article}{
  title={Almost all regular graphs are {H}amiltonian},
  author={Robinson, Robert W.},
  author={Wormald, Nicholas C.},
  journal={Random Structures \& Algorithms},
  volume={5},
  number={2},
  pages={363--374},
  year={1994},
  publisher={Wiley Online Library}
}

\end{biblist}
\end{bibdiv}

\end{document}